\documentclass[preprint,11pt]{elsarticle}

\usepackage{natbib}
\usepackage{graphicx,amsmath,amsthm,amsfonts,color,amssymb,comment,caption,subcaption}
\usepackage{tikz}
\usepackage{subfiles} 
\usepackage[nameinlink]{cleveref} 
\usepackage{kpfonts}
\usepackage{etoolbox}
\usepackage{thmtools}
\usepackage{environ}
\usepackage[textsize=tiny]{todonotes}

\textheight 9.00in
\textwidth 7.00in
\oddsidemargin -0.2in
\evensidemargin -0.2in
\topmargin -0.5in

\def\bfm#1{\boldsymbol{#1}} 
\def\RR{\mathbb{R}}

\newcommand{\overbar}[1]{\mkern 1.5mu\overline{\mkern-1.5mu#1\mkern-1.5mu}\mkern 1.5mu}

\newtheorem{thm}{Theorem}
\newtheorem{lem}[thm]{Lemma}

\newtheorem{rem}[thm]{Remark}
\newproof{pf}{Proof}
\newproof{pot}{Proof of Theorem \ref{thm2}}
\journal{Computer Aided Geometric Design}

\bibliographystyle{elsarticle-num}

\begin{document}

\begin{frontmatter}

\title{Optimal one-sided approximants of circular arc}

\author[address1,address2]{Ada \v{S}adl Praprotnik\corref{corauth}}
\ead{ada.sadl-praprotnik@imfm.si}
\cortext[corauth]{Corresponding author}

\author[address1,address2]{Ale\v{s} Vavpeti\v{c}}
\ead{ales.vavpetic@fmf.uni-lj.si}

\author[address1,address2]{Emil \v{Z}agar}
\ead{emil.zagar@fmf.uni-lj.si}

\address[address1]{Faculty of Mathematics and Physics, University of Ljubljana, 
Jadranska 19, Ljubljana, Slovenia}
\address[address2]{Institute of Mathematics, Physics and Mechanics, Jadranska 19, Ljubljana, Slovenia}

\begin{abstract}
 The optimal one-sided parametric polynomial approximants of a 
 circular arc are considered. More precisely, the approximant 
 must be entirely in or out of the underlying circle of an arc.
 The natural restriction to an arc's approximants interpolating
 boundary points is assumed. However, the study
 of approximants, which additionally interpolate corresponding tangent
 directions and curvatures at the boundary of an arc, is also
 considered.
 Several low-degree polynomial approximants are studied in detail.
 When several solutions fulfilling the
 interpolation conditions exist, the optimal one is
 characterized, and a numerical algorithm for its construction is
 suggested. Theoretical results are demonstrated with several numerical
 examples and a comparison with general (i.e. non-one-sided)
 approximants are provided.
\end{abstract}

\begin{keyword}
geometric interpolation  \sep circular arc \sep parametric polynomial \sep B\'ezier curve
\MSC[2020] 65D05 \sep  65D07 \sep 65D17
\end{keyword}

\end{frontmatter}


\section{Introduction}
Circular arcs are among the most fundamental 
geometric objects used in the design of planar curves. Joining two arcs
together in a biarc is quite a popular procedure 
in engineering, specially in robotics (see \cite{Sir-Feichtinger-Juettler-biarcs-2006} and references therein). 
Although circular arcs can be considered as canonical objects in a particular computer-based 
design system, it is often desirable to work with polynomial objects
only. Since a circular arc does not possess an exact polynomial
representation, this leads to the problem of good 
(or the optimal) approximation 
of circular arcs by parametric polynomial curves. Several authors 
studied this problem in the frame of computer-aided geometric
design (CAGD), from the pioneering paper 
\cite{Dokken-Daehlen-Lyche-Morken-90-CAGD} to the recent 
results on optimal approximation in  
\cite{Vavpetic-2020-optimal-circular-arcs} 
and \cite{Vavpetic-Zagar-21-circle-arcs-Hausdorff}.
Meanwhile, many authors considered a relaxed problem. 
They have either
used the simplified radial distance as a measure of optimality,
or they have fixed some free parameters to make the problem easier to handle
(see \cite{Vavpetic-Zagar-2019-framework-circle-arcs} and a
comprehensive list of references therein).\\
In this paper, we focus on the optimal one-sided approximation, i.e., 
the one where an approximant of a circular arc is entirely in or
entirely out of the underlying circle. 
This might be of interest when
an approximant must not cross a circular arc for some
reason (e.g., a design of circular molds). In both cases, the inner
and outer ones will be studied in detail, and
optimal solutions will be provided according to the Hausdorff distance.\\
The paper is organized as follows. In Section \ref{sec:preliminaries}, we review some basic definitions and discuss the number of free parameters for 
approximants of different degrees and the order of the geometric continuity. Sections \ref{sec:G20}, \ref{sec:G31}, and \ref{sec:G42} deal with one--parametric optimization problems related the optimal one-sided $G^0, G^1$ and $G^2$ approximants of degree $2, 3$ and $4$, respectively. In Sections \ref{sec:G30} and \ref{sec:G41}, we move further to the two parametric problems and construct the optimal one-sided cubic $G^0$ and quartic $G^1$ approximants. Some numerical 
examples and comparison of the obtained results are presented in Section  \ref{sec:examples}. The paper concludes with Section \ref{sec:conclusion} where we give some final remarks and suggestions for possible future work.

\section{Preliminaries}
\label{sec:preliminaries}

Let $\bfm{c}:[-\varphi, \varphi] \rightarrow \RR^2$, $0 <\varphi \leq \frac{\pi}{2}$, denote the standard parameterization of a canonical
unit circular arc given by 
$\bfm{c}(\alpha) = (\cos \alpha, \sin \alpha)^T$.  
Note that any other
arc can be easily translated, rotated and scaled to the canonical one, and these transformations do not affect the optimality.
We are looking for a polynomial 
approximant $\bfm{p}$ of $\bfm{c}$, where $\bfm{p}:[-1,1] \rightarrow \RR^2$ is a parametric polynomial curve of degree $n$ that interpolates two boundary points of $\bfm{c}$, i.e., $\bfm{p} (\mp 1) = (\cos \varphi, \mp \sin\varphi)^T$. Let
\begin{equation*}
    \psi(t) = \left\|\mathbf{p}(t)\right\|_2^2-1, 
    \quad t \in [-1,1],
\end{equation*}
where $\left\|.\right\|_2$ denotes the Euclidean norm, be the simplified radial
error function. Our goal is to find $\bfm{p}$
which minimizes $\left|\psi\right|$ over $[-1,1]$. In addition,
we shall require that $\psi$ is either non-positive or non-negative,
and refer to the corresponding $\bfm{p}$ as an inner or an outer approximant, respectively. Note that the radial error
is given by $\psi_r=\sqrt{\psi+1}-1$, and it implies the Hausdorff distance 
(see \cite{Jaklic-Kozak-2018-best-circle}, e.g.). For the
inner and outer approximation, we also observe that 
optimality
of $\psi_r$ coincides
with optimality of $\psi$, thus it is
enough to consider $\psi$ only.\\
It will turn out convenient to express the polynomial curve 
$\bfm{p}$ in the Bernstein-Bézier form as
\begin{equation*}
    \bfm{p}(t) = \sum_{i=0}^n B_i^n(t)\bfm{b}_i, \quad t \in [-1,1],
\end{equation*}
where $\bfm{b}_i$, $i=0,1,\ldots,n$, are control points and $B_i^n$, $i=0,1,\ldots,n$, are reparameterized Bernstein basis polynomials over interval $[-1,1]$, i.e.,
\begin{equation*}
    B_i^n(t) = \binom{n}{i}\left(\frac{1+t}{2}\right)^i\left(\frac{1-t}{2}\right)^{n-i}.
\end{equation*}
The interpolation of boundary points implies that the interpolant
$\bfm{p}$ is a $G^0$ approximant. If we additionally require that
also tangent directions are interpolated, then we deal with $G^1$
approximant, and if we further interpolate the signed curvature,
then the approximant is called a $G^2$ approximant.\\
Since a circular arc $\bfm{c}$ is symmetric across the abscissa,
the approximant $\bfm{p}$ must possess this property too. 
This reduces the number of free parameters
in $\bfm{p}$ from $2n+2$ to $n+1$. Since $\bfm{p}$ is a $G^0$
approximant, the number of free parameters is further reduced to $n-1$.
Since $\psi$ depends on these $n-1$ free parameters, our goal is
to determine them in a way that the corresponding $\bfm{p}$
minimizes
\begin{equation}\label{eq:m_p}
    \max_{t\in[-1,1]}\left|\psi(t)\right|.
\end{equation} 
The number of free parameters depends on the degree $n$
of the approximant $\bfm{p}$, and on the order of geometric 
continuity. The bigger the difference between the degree and
the order is, the harder the optimization of \eqref{eq:m_p}. 
Our discussion will first deal with the cases where the difference
is two, followed by the analysis of cases where it is three.
However, rising the degree $n$ also makes the problem much more 
difficult to handle, so we shall restrict it to degrees up to four.\\
In the following analyses, $\cos \varphi$ and $\sin \varphi$
will often be involved, and we will use the abbreviations
$c=\cos \varphi$ and $s=\sin\varphi$. Note that $0\leq c<1$
and $0<s\leq 1$.

\section{Optimal quadratic $G^0$ approximants}
\label{sec:G20}

In the quadratic $G^0$ case control points determining the approximant $\bfm{p}$ are given as
$\bfm{b}_0 = (c,-s)^T$, 
$\bfm{b}_1 = (\xi,0)^T$, and $\bfm{b}_2 = (c,s)^T$,
where $\xi > 0$. The function $\psi$ is a quartic polynomial
depending only on the parameter $\xi$ and can be expressed as
\begin{align*}
    \psi(t,\xi) &= 
   \frac{1}{8} \left(1-t^2\right) \left(2 \left(1-t^2\right) \xi ^2+4 c \left(1+t^2\right)\xi -8+2 c^2 \left(1-t^2\right)\right).
\end{align*}
We observe that $\psi(\cdot,\xi)$ increases on $(0,\infty)$
for all $t\in (-1,1)$ and $0\leq c <1$.
Thus, the optimal inner approximant is uniquely determined by the maximal $\xi>0$ 
for which $\max_{t\in(-1,1)}\psi(t,\xi)=0$ and the
optimal outer approximant is also uniquely given by the minimal $\xi>0$
for which $\min_{t\in(-1,1)}\psi(t,\xi)=0$. Let us denote them as $\xi^\lor$ and $\xi^\land$,
respectively.\\
Since $\psi(\cdot,\xi)$ is symmetric on $(-1,1)$, the parameter $\xi^\lor$ is obtained 
as a solution of the equation $\psi(0,\xi^\lor)=0$, i.e., $\xi^\lor=2-c$ and 
the parameter $\xi^\land$ is determined as a solution of the
equation $\psi'(1,\xi)=0$, i.e., $\xi^\land=\tfrac{1}{c}$. Note that in the latter case 
we get the $G^1$ approximant.

\section{Optimal cubic $G^1$ approximants}
\label{sec:G31}

In this case, the control points of an approximant $\bfm{p}$ are given as 
\begin{equation*}
    \bfm{b}_0 = (c,-s)^T, \quad \bfm{b}_1 = (c,-s)^T+\xi(s,c)^T, \quad \bfm{b}_2 = (c,s)^T+\xi(s,-c)^T \quad \text{and} \quad \bfm{b}_3 = (c,s)^T,
\end{equation*}
where $\xi > 0$. Note that the $G^1$ condition at the boundary points has already
been incorporated. The error function $\psi$ is a polynomial of degree $6$, which can
be written as
\begin{align*}
    \psi(t,\xi) 
   &=\frac{1}{16} \left(1-t^2\right)^2 \left(9 \xi^2 \left(c^2t^2 +1-c^2\right)+12 \xi
   \left(2-t^2\right) c \sqrt{1-c^2}-4 \left(4-t^2\right) \left(1-c^2\right)\right).
\end{align*}
Again, $\psi(t,\cdot)$ is an increasing function for all $t\in (-1,1)$ and $0\leq c<1$. Since in addition $\psi(\cdot,\xi)$ is 
a polynomial of degree six with a positive leading coefficient, 
the parameter $\xi^\land$ of the optimal outer approximant must fulfil the condition
$\psi(0,\xi^\land)=0$, and consequently $\xi^\land=\tfrac{4\sqrt{1-c^2}}{3(1+c)}$.\\ 
On the other hand, the condition for the approximant being an inner one is
that $t=\pm 1$ are triple zeros of $\psi$, i.e.,  $\psi''(\pm 1,\xi)=0$ which implies the solution 
$\xi^\lor=\tfrac{2}{3}\sqrt{1-c^2}(\sqrt{3+c^2}-c)$.

\section{Optimal quartic $G^2$ approximants}
\label{sec:G42}
As we will see in the following, the analysis gets
more complicated here as in the previous two cases.
Considering $G^1$ and $G^2$ conditions, we can
express control points of an approximant $\bfm{p}$ as
\begin{equation*}
    \bfm{b}_0 = (c,-s)^T, \bfm{b}_1 = (c,-s)^T+\xi (s,c)^T, 
    \bfm{b}_2=\left(\tfrac{3-4\xi^2}{3c},0\right)^T,
    \bfm{b}_3 = (c,s)^T+\xi(s,-c)^T, \bfm{b}_4 = (c,s)^T,
\end{equation*}
where again $\xi > 0$. The free parameter $\xi$ was chosen similarly as in the 
previous section. The control point $\bfm{b}_2$ is not defined for $c=0$,
but this case can be obtained as the limit $c\to 0$.
The function $\psi$ is now a polynomial of degree $8$, and
with 
    $\beta_{1,2}=\tfrac 1 2\left(c\sqrt{1-c^2}-\sqrt{3\pm 8c+6c^2-c^4}\right)$, 
    $\beta_{3,4}=\tfrac 1 2\left(c\sqrt{1-c^2}+\sqrt{3\mp 8c+6c^2-c^4}\right)$,
    $ \alpha_{1,2}=\mp\tfrac 1 2 \sqrt{1-c^2}\left(\sqrt{3+c^2}\pm c\right)$,
    it can be written  as
\begin{equation*}
    \psi(t,\xi)=\frac{(t^2-1)^3}{64c^2}
    \left(16(\xi-\alpha_1)^2(\xi-\alpha_2)^2t^2
    -16(\xi-\beta_1)(\xi-\beta_2)(\xi-\beta_3)(\xi-\beta_4)\right),
\end{equation*}
so its leading coefficient is $\text{lc}(\psi(\cdot,\xi))=\tfrac{1}{64c^2}16(\xi-\alpha_1)^2(\xi-\alpha_2)^2$.
Unfortunately, $\psi(t,\cdot)$ is no longer a monotone function, and a more involved analysis is needed.\\
Observe first that 
$\psi(0,\xi)=\tfrac{1}{4c^2}(\xi-\beta_1)(\xi-\beta_2)(\xi-\beta_3)(\xi-\beta_4)$
and
$\frac{\partial{\psi}}{\partial\xi}(0,\xi)=\tfrac{1}{c^2}(\xi-\gamma_1)(\xi-\gamma_2)(\xi-\gamma_3)$,
where $\gamma_2=\tfrac1 2c\sqrt{1-c^2}$ and
$\gamma_{1,3}=\tfrac1 2(c\sqrt{1-c^2}\mp\sqrt{3+6c^2-c^4})$.
Let 
$$
\text{td}(\xi):=\psi'''(1,\xi)
=\frac{48 \sqrt{1-c^2} \xi ^3}{c}
  +48 \xi ^2-12 \sqrt{1-c^2} \left(\frac{3}{c}+c\right) \xi +12 \left(1-c^2\right).
$$
Then $\text{td}(0)=12(1-c^2)>0$, $\text{td}'(0)=-\frac{12}{c}\sqrt{1-c^2}(3+c^2)<0$,
and $\text{td}(\beta_3)=12 (1-c)^4 \left((2+c)-\sqrt{1+c} \sqrt{3+c}\right)^2>0$.
Furthermore, $\text{td}'(\xi)=\tfrac{144}{c}\sqrt{1-c^2}(\xi-\delta_1)(\xi-\delta_2)$,
where
$\delta_{1,2}=\tfrac{1}{6\sqrt{1-c^2}}(-2c\mp \sqrt{9-2c^2-3c^4})$, and 
$$ 
  \text{td}(\delta_2)=
  -\frac{48 \left(1-c^2\right)^3 \left(27-18 c^2-c^4\right)}
  {c \left(8 c \left(27 \left(1-c^2\right)^2+22 c^2-18 c^4\right)+4 
  \left(9-2 c^2-3 c^4\right) \sqrt{9-2c^2-3 c^4}\right)}<0.
$$
It is easy to see that $\alpha_1,\beta_1,\gamma_1,\delta_1\le 0$
and $\beta_2\le\gamma_2\le\delta_2\le\beta_3\le \alpha_2\le\gamma_3\le\beta_4$ 
for all $c$ and $\beta_2\ge 0$ if and only if $c\ge\tfrac 3 5$.

\subsection{Optimal inner approximant}\label{subsec:G24_inner}

For $\xi^\lor=\beta_3$ we have $\psi(t,\xi^\lor)=-\tfrac 1 4(1-c)^4(2+c-\sqrt{3+4c+c^2})^2t^2(1-t^2)^3$. A parameter $\xi>\beta_4$ does not induce an inner approximant since $\psi(0,\xi)>0$. Suppose $\beta_3<\xi\le \beta_4$. Since $\text{td}$ is increasing on $[\beta_3,\beta_4]$ there is $\varepsilon>0$, such that $\psi(t,\xi)<\psi(t,\xi^\lor)$ for $t\in (1-\varepsilon, 1)$. If $\xi$ induces a better approximant as $\xi^\lor$, then the graphs of $\psi(\cdot,\xi)$ and $\psi(\cdot,\xi^\lor)$ have two intersection on $(0,1)$, therefore ten intersections on $[-1,1]$, which is not possible. If $c<\tfrac 3 5$, there are no $\xi\in[0,\xi^\lor]$ which induce an inner approximant.
Suppose $c\ge\tfrac 3 5$ and $0\le \xi\le \beta_2$. Since 
$\text{lc}(\psi(\cdot,\xi^\lor))<\text{lc}(\psi(\cdot,\beta_2))$, 
the parameter $\xi^\lor$ induces a better approximant then $\beta_2$. Since $\text{td}$ is decreasing on $[0,\beta_2]$ a parameter $\xi<\beta_2$ induces a worse approximant then $\beta_2$. Therefore, $\xi^\lor$ induces the optimal inner approximant.
In the limit case, when $c=0$, we get $\xi^\lor=\tfrac{\sqrt{3}}{2}$ and the abscissa of the control point $\bfm{b}_2$ is 
$\lim_{c\to 0}\tfrac{3-4\beta_3^2}{3c}=\tfrac 2 3(4-\sqrt{3})$.

\subsection{Optimal outer approximant}

Let $\xi^\land$ be the only zero of $\text{td}$ on the interval $[\delta_2,\infty)$. By the above calculations $\xi^\land<\beta_3$, so $\xi^\land$ induces an outer approximant. If $\xi>\xi^\land$, then $\text{td}(\xi)>0$, hence $\xi$ does not induce an outer approximant. Suppose that $0\le \xi<\xi^\land$. If $\text{td}(\xi)<0$, then $\xi$ does not induce an outer approximant. Let $\text{td}(\xi)\ge 0$. Since 
$\text{lc}(\psi(\cdot,\xi))>\text{lc}(\psi(\cdot,\xi^\land))$ the graphs of $\psi(\cdot,\xi)$ and $\psi(\cdot,\xi^\land)$ intersect on $(1,\infty)$. Hence, the graphs can not have an intersection on the interval $(-1,1)$, so $\xi$ induces a worse approximant as $\xi^\land$.
In the limit case, when $c=0$, we get $\xi^\land=\tfrac{\sqrt{3}}{2}$ and 
the abscissa of the control point $\bfm{b}_2$ is 
$\tfrac {8\sqrt{3}}{9}$.\\
In all previous cases, only one parameter was involved in the
optimization process. Now we move to the two--parametric
optimization.

\section{Optimal cubic $G^0$ approximants}
\label{sec:G30}
This section deals with cubic one-sided approximants of $\bfm{c}$. The control points determining the approximant $\bfm{p}$ are
\begin{equation*}
    \bfm{b}_0 = (c,-s), \quad \bfm{b}_1 = (\xi,-\eta), \quad \bfm{b}_2 = (\xi,\eta) \quad \text{and} \quad \bfm{b}_3 = (c,s),
\end{equation*}
where $\xi>0$. The function $\psi$ is of degree $6$ and it is of the form
\begin{equation*}
    \psi(t,\xi,\eta) =\frac{1}{16} \left(t^2-1\right) \left(\left(3 \eta -\sqrt{1-c^2}\right)^2 t^4+\left(16 \left(1-c^2\right)-9   \left(\eta +\sqrt{1-c^2}\right)^2+9 (\xi -c)^2\right) t^2+\left(16- \left(3\xi+c\right)^2\right)\right),
\end{equation*} 
hence
\begin{equation}
    \label{eq:cubicpsi_lc_0_der1}
    \text{lc}(\psi(\cdot,\xi,\eta))= \frac{1}{16}\left(\sqrt{1-c^2}-3\eta\right)^2,
    \quad
    \psi(0,\xi,\eta) = \frac{1}{16}\left(3\xi+c \right)^2-1, 
    \quad
    \psi'(1,\xi,\eta) = 3\left(1-\xi c-\eta\sqrt{1-c^2}\right).
\end{equation}
We deal with the two-parametric optimization problem here, and the analysis
is much more involved than in the previous cases. Let us consider the inner and the outer
approximation separately again.

\subsection{Optimal inner approximant}
Following the idea of equioscillation of
the best polynomial approximant in the functional case, we might expect 
equioscillation of $\psi$. Since
it is an inner approximant, it should touch the abscissa from below
as many times as possible. Thus, the guess is that its graph
looks like the one in \Cref{fig:graph_psi_cubic_quartic} (left).
Since $\psi$ is of degree $6$ and it has two simple zeros at
$\pm 1$, the remaining two zeros on $(-1,1)$
should be double ones and
related to the maxima of the Chebyshev polynomial of degree six. 
Consequently, they equal to $\pm \tfrac1 2$ and we must have
\begin{equation}
    \label{eq:cubic_G0_system}
    \psi\left(\tfrac{1}{2},\xi,\eta\right) = 0, \quad  \psi'\left(\tfrac{1}{2},\xi,\eta\right) = 0.
\end{equation}
It will be shown that the above system has a unique solution with $\xi>c$ and
that this solution implies the optimal approximant. Observe that \eqref{eq:cubic_G0_system} is an algebraic system of equations for $\xi$ and $\eta$. Two polynomials of a Gr\"obner basis for the ordering $\eta\prec \xi$
are $p_1(\cdot)=(\cdot-c)g_1(\cdot)$ and $g_2$, where
\begin{align*}
  g_1(\xi)&=2187\xi^3+3807c\xi^2+(1953c^2-3840)\xi+245c^3-4352c,\\
  g_2(\xi,\eta)&=-8 \left(1-c^2\right) \eta +\sqrt{1-c^2} \left(27 \xi ^2+10 c \xi +3 c^2-40\right).
\end{align*}
The zero $\xi=c$ of $p_1$ is not admissible, since the corresponding
polynomial approximant reduces to the line segment, which is not the
optimal one. Thus, we are restricted to the positive zeros of $g_1$. Let 
$I=(\xi_{min},\xi_{max})$, where
    \begin{equation*}
      \xi_{min}=\frac{1}{27} \left(-5 c+8 \sqrt{2} \sqrt{9-c^2}\right)
      \quad {\rm and} \quad
      \xi_{max}=\frac{4-c}{3}.
    \end{equation*}
Note that $c<1<\xi_{min}<\xi_{max}$ for all $c\in [0,1)$.
\begin{lem}\label{eq:g1_cubic}
    The only positive zero of $g_1$ is in $I$.
\end{lem}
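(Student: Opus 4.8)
The goal is to show that the cubic $g_1(\xi)=2187\xi^3+3807c\xi^2+(1953c^2-3840)\xi+245c^3-4352c$ has exactly one positive root, and that this root lies in the open interval $I=(\xi_{min},\xi_{max})$. The plan is to combine a sign-change argument at the endpoints of $I$ with a count of the total number of positive roots, so that the one root we locate is forced to be the unique positive one.

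First I would evaluate $g_1$ at the two endpoints of $I$ and show the values have opposite signs. The value $g_1(\xi_{min})$ and $g_1(\xi_{max})$ are, after substitution, explicit algebraic expressions in $c$ (involving $\sqrt{9-c^2}$ at the left endpoint), so the claim that $g_1(\xi_{min})<0<g_1(\xi_{max})$ (or the reverse) becomes a one-variable inequality in $c\in[0,1)$ that can be verified by clearing radicals, squaring where the signs permit, and checking the resulting polynomial is sign-definite on $[0,1)$ (e.g.\ by exhibiting it as a sum of terms with a common sign, or by confirming it has no root in $[0,1)$). Since $g_1$ is continuous, the Intermediate Value Theorem then guarantees at least one root in $I$. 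Because the leading coefficient $2187>0$, knowing the sign pattern at the endpoints also pins down the sign of $g_1$ to the left of $\xi_{min}$ and to the right of $\xi_{max}$, which will be useful for ruling out additional positive roots.

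Next I would bound the number of positive roots away from one. The cleanest route is Descartes' rule of signs applied to $g_1$ viewed as a polynomial in $\xi$: for $c\in[0,1)$ the leading coefficient is positive, $3807c\ge 0$, the coefficient $1953c^2-3840<0$, and the constant $245c^3-4352c=c(245c^2-4352)<0$ for $c>0$ (and is $0$ at $c=0$). This gives the sign sequence $+,+,-,-$, i.e.\ exactly one sign change, so $g_1$ has \emph{exactly one} positive root for every $c\in(0,1)$; the case $c=0$ can be handled separately as a limit or by direct factoring. Combining this with the previous step, the unique positive root must be the one already shown to lie in $I$.

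The main obstacle I anticipate is the endpoint sign verification at $\xi_{min}$, where the radical $\sqrt{9-c^2}$ makes $g_1(\xi_{min})$ a genuinely messy expression; care is needed when squaring to eliminate the radical, since one must track the sign of the quantity being squared to avoid introducing spurious sign flips. The inequality $c<1<\xi_{min}<\xi_{max}$ is asserted in the text and may be invoked, which helps confine attention to the relevant range. If the direct Descartes argument at $c=0$ is degenerate, I would treat $c=0$ by substituting directly into $g_1$ and solving the reduced cubic. I expect the Descartes count to be the robust backbone of the argument, with the IVT step merely localizing the root that the sign count already guarantees to be unique.
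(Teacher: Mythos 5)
Your proposal is correct, and its localization step coincides with the paper's: the paper likewise evaluates $g_1$ at the endpoints of $I$ and obtains the closed forms $g_1(\xi_{min})=-\frac{2048(1-c^2)^2}{4c+\sqrt{2}\sqrt{9-c^2}}<0$ and $g_1(\xi_{max})=64(1-c)^3>0$, so the radical you were worried about collapses into a manifestly negative expression and no delicate squaring is actually needed. Where you genuinely differ is the counting step. The paper never invokes Descartes' rule; instead it accounts for all three roots of the cubic by two further evaluations, $g_1\left(-\tfrac{37}{27}c\right)=\tfrac{8192}{9}c(1-c^2)\ge 0$ and $g_1(0)=c(245c^2-4352)\le 0$, which together with the positive leading coefficient force one root in $\left[-\tfrac{37}{27}c,0\right]$, one root to the left of it, and the third in $I$; hence the two roots other than the one in $I$ are non-positive. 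Your Descartes argument replaces this with the sign pattern $+,+,-,-$ of the coefficients, giving exactly one positive root at once; this is arguably cleaner, since it avoids the somewhat magical test point $-\tfrac{37}{27}c$, and it even handles $c=0$ uniformly (Descartes' rule skips zero coefficients, leaving the pattern $+,-$ with one sign change), so the separate treatment you propose for $c=0$ is unnecessary. Both routes prove the lemma; the paper's yields slightly more information (an explicit bracketing of the non-positive roots), but that information is not needed for the statement.
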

\begin{proof}
    We have
    $g_1\left(-\tfrac{37}{27}c\right)=\tfrac{8192}9c(1-c^2)\ge 0$,
    $g_1(0)=c(245c^2-4352)\le 0$,
    $g_1(\xi_{min})=-\frac{2048(1-c^2)^2}{4 c+\sqrt{2} \sqrt{9-c^2}}< 0$,
    and $g_1(\xi_{max})=64(1-c)^3>0$.
    Since the leading coefficient of $g_1$ is positive,
    $g_1$ has two non-positive zeros and the unique positive one
    in $I$. This completes the proof.
\end{proof}
Since $g_2(\cdot,\eta)$ is a linear polynomial, the unique solution of 
the system \eqref{eq:cubic_G0_system} is $(\xi^\lor,\eta^\lor)$, 
where $\xi^\lor$ is the unique zero of $g_1$ in $I$, and 
\begin{equation}\label{eq:eta_opt_G30}
  \eta^\lor=\frac{27 {\xi^\lor}^2+10 c \xi^\lor +3 c^2-40}{8\sqrt{1-c^2}}.
\end{equation}
Let $J=(\eta_{min},\eta_{max}$), where
$$
  \eta_{min}=\frac{\sqrt{1-c^2}}{3} 
  \quad
  {\rm and}
  \quad
  \eta_{max}=\frac{3-c}{1+c}\frac{\sqrt{1-c^2}}{3}.
$$
\begin{lem}\label{lem:etaG30}
  If $(\xi^\lor,\eta^\lor)$ is the solution of the system \eqref{eq:cubic_G0_system},
  then $\eta^\lor \in J$.
\end{lem}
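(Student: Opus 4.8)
The plan is to show that $\eta^\lor$, as given explicitly in \eqref{eq:eta_opt_G30}, lies strictly between $\eta_{min}$ and $\eta_{max}$. Since $\eta^\lor$ is determined by $\xi^\lor$ through a fixed rational expression, the two bounds $\eta^\lor>\eta_{min}$ and $\eta^\lor<\eta_{max}$ should each translate into a statement about the location of $\xi^\lor$. So the first step is to substitute \eqref{eq:eta_opt_G30} into each of the two desired inequalities and clear the denominator $8\sqrt{1-c^2}>0$, reducing each inequality to a polynomial inequality in $\xi^\lor$ (with coefficients depending on $c$). Concretely, $\eta^\lor>\eta_{min}=\tfrac{\sqrt{1-c^2}}{3}$ becomes $27{\xi^\lor}^2+10c\,\xi^\lor+3c^2-40>\tfrac{8(1-c^2)}{3}$, i.e. a quadratic-in-$\xi^\lor$ condition, and similarly $\eta^\lor<\eta_{max}$ becomes another quadratic-in-$\xi^\lor$ condition.

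The key idea is then to exploit the characterization of $\xi^\lor$ from \Cref{eq:g1_cubic}: we already know $\xi^\lor$ is the unique positive root of $g_1$ and that it lies in $I=(\xi_{min},\xi_{max})$. Rather than solving for $\xi^\lor$ explicitly, I would evaluate each of the two quadratic boundary expressions at the endpoints $\xi_{min}$ and $\xi_{max}$, or—more robustly—reduce each quadratic modulo $g_1$ so that the inequality is expressed as a sign condition on a lower-degree polynomial in $c$ alone. Since $g_1$ is cubic in $\xi^\lor$, polynomial division of each quadratic by $g_1$ is not directly the right move; instead the cleaner route is to define the boundary-defect polynomials
\begin{equation*}
  q_{min}(\xi):=27\xi^2+10c\,\xi+3c^2-40-\tfrac{8(1-c^2)}{3},
  \qquad
  q_{max}(\xi):=\tfrac{3-c}{1+c}\,8(1-c^2)-\bigl(27\xi^2+10c\,\xi+3c^2-40\bigr),
\end{equation*}
and to show $q_{min}(\xi^\lor)>0$ and $q_{max}(\xi^\lor)>0$. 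Because each $q$ is an upward/downward parabola in $\xi$, it suffices to verify the sign of $q_{min}$ and $q_{max}$ on the whole interval $I$, which is controlled by checking their values (and, where needed, the vertex location) at $\xi_{min}$ and $\xi_{max}$; these are explicit algebraic expressions in $c$, and the resulting sign conditions should factor nicely, typically as products of powers of $(1-c)$, $(1+c)$, and a positive quantity, mirroring the clean factorizations already appearing in \Cref{eq:g1_cubic} (e.g. $g_1(\xi_{max})=64(1-c)^3$).

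The main obstacle will be the second bound $\eta^\lor<\eta_{max}$. The lower bound $\eta^\lor>\eta_{min}$ is geometrically natural and should follow from a short monotonicity or endpoint-sign argument, since $\eta_{min}=\tfrac{\sqrt{1-c^2}}{3}$ is exactly the $G^1$ tangent-length value and $\xi^\lor$ pushes the control point outward. The upper bound, however, couples $\xi^\lor$ and $c$ through the factor $\tfrac{3-c}{1+c}$, so clearing denominators introduces an extra $(1+c)$ and the endpoint evaluations at $\xi_{min}$ (which itself contains the surd $\sqrt{9-c^2}$) will produce messier expressions whose positivity is not visually obvious. I expect the crux to be rewriting $q_{max}(\xi_{min})$ in a manifestly nonnegative form—either by recognizing a perfect square or by bounding the surd term—and confirming it does not vanish on $[0,1)$. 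If a direct endpoint check is insufficient because the relevant parabola's vertex falls inside $I$, the fallback is to combine the sign of $q_{max}$ at the endpoints with the fact that $g_1(\xi^\lor)=0$ to pin down $\xi^\lor$ more tightly, but I anticipate the endpoint argument alone will suffice once the algebra is arranged to expose the $(1-c)$ and $(1+c)$ factors.
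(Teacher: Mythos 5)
Your proposal is correct and takes essentially the same route as the paper, which defines $f(\xi)=\tfrac{27\xi^2+10c\xi+3c^2-40}{8\sqrt{1-c^2}}$, observes that $f$ is strictly increasing on $I$ (your vertex-location check: the vertex $-\tfrac{5c}{27}$ lies to the left of $I$), and evaluates at the endpoints of $I$. The crux you anticipate dissolves in the computation: the surd $\sqrt{9-c^2}$ cancels identically and one obtains the exact identities $f(\xi_{min})=\eta_{min}$ and $f(\xi_{max})=\eta_{max}$ (i.e., $J=f(I)$), so your defect polynomials $q_{min}$ and $q_{max}$ vanish at $\xi_{min}$ and $\xi_{max}$ respectively and are positive on the open interval $I$ by monotonicity; the only slip is a missing factor $\tfrac13$ multiplying $8(1-c^2)$ in your $q_{max}$.
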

\begin{proof}
  Let $f(\xi)=\frac{27 {\xi}^2+10 c \xi +3 c^2-40}{8\sqrt{1-c^2}}$.
  Then
  $f'(\xi)=\frac{27 \xi +5 c}{4 \sqrt{1-c^2}}$ 
  and $f$ is strictly increasing on $I$. Thus 
  $f(\xi_{min})=
  \frac{\sqrt{1-c^2}}{3}<f(\xi) < \frac{3-c}{1+c}\frac{\sqrt{1-c^2}}{3}=
  f(\xi_{max})$.
  Since $\xi^\lor\in I$ and
  $\eta^\lor=f(\xi^\lor)$ by \eqref{eq:eta_opt_G30},
  the proof is complete.
\end{proof}
\noindent We shall see in the following that $(\xi^\lor,\eta^\lor)$
induces the polynomial approximant $\bfm{p}$ with 
\begin{equation*}
  (\xi^\lor,\eta^\lor)=
  \underset{\substack{(\xi,\eta)}}{\rm argmin}
  \left(\max_{t\in[-1,1]}\left|\psi(t,\xi,\eta)\right|\right),
\end{equation*}
i.e., the optimal one.\\
Let us suppose that $(\overbar{\xi},\overbar{\eta})$ induces a better
approximant. If $\overbar{\xi}<\xi^\lor$, then by 
\eqref{eq:cubicpsi_lc_0_der1} we have
$\psi(0,\overbar{\xi},\overbar{\eta})\leq\psi(0,\xi^\lor,\eta^\lor)<0$.
Thus $\left|\psi(0,\overbar{\xi},\overbar{\eta})\right|
\geq\left|\psi(0,\xi^\lor,\eta^\lor)\right|$, and 
$\psi(\cdot,\overbar{\xi},\overbar{\eta})$ does not provide a smaller error than 
$\psi(\cdot,\xi^\lor,\eta^\lor)$.

If $\overbar{\xi}\geq\xi^\lor$ and $\overbar{\eta}>\eta^\lor$, then by \eqref{eq:cubicpsi_lc_0_der1} 
$$
  \psi'(1,\overbar{\xi},\overbar{\eta})<\psi'(1,\xi^\lor,\eta^\lor)\quad {\rm and} \quad
   \text{lc}(\psi(\cdot,\overbar{\xi},\overbar{\eta}))
   >
   \text{lc}(\psi(\cdot,\xi^\lor,\eta^\lor)).
$$
Consequently, $\psi(\cdot,\overbar{\xi},\overbar{\eta})$ and
$\psi(\cdot,\xi^\lor,\eta^\lor)$ must intersect six times in $[-1,1]$
(see \Cref{fig:graph_psi_intersections} (left)) and at least twice out of $[-1,1]$, which contradicts the fact that $\psi(\cdot,\overbar{\xi},\overbar{\eta})$ is of degree six. If $\overbar{\xi} = \xi^\lor$ and $\overbar{\eta}<\eta^\lor$, then $\psi'(1,\overbar{\xi},\overbar{\eta})>\psi'(1,\xi^\lor,\eta^\lor)$, so $\psi(\cdot,\overbar{\xi},\overbar{\eta})$ and $\psi(\cdot,\xi^\lor,\eta^\lor)$
 must intersect at least eight times in $[-1,1]$ 
 (see \Cref{fig:graph_psi_intersections} (left) again), which can not be the case.\\
 Thus, we are left with
 $\overbar{\xi}>\xi^\lor$ and $\overbar{\eta}\leq \eta^\lor$. 
 The case where $\psi'(1,\overbar{\xi},\overbar{\eta})\geq \psi'(1,\xi^\lor,\eta^\lor)$ is eliminated by using the same argument as in the previous case, and the following lemma eliminates the last case, i.e., $\psi'(1,\overbar{\xi},\overbar{\eta})< \psi'(1,\xi^\lor,\eta^\lor)$.
 
\begin{figure}[htbp]
    \centering
    \begin{subfigure}{.5\textwidth} 
        \centering
        \includegraphics[scale=0.8]{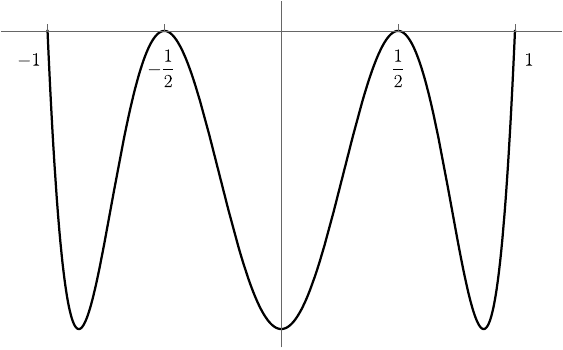}
    \end{subfigure}%
    \begin{subfigure}{.5\textwidth} 
         \centering
         \includegraphics[scale=0.8]{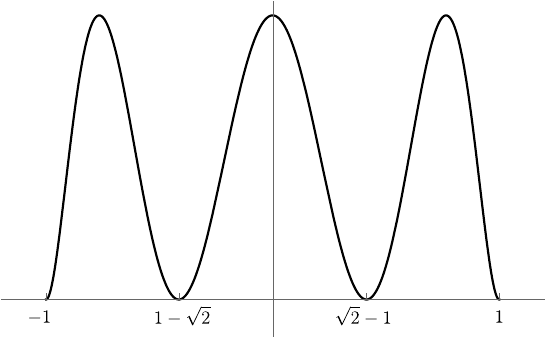}
    \end{subfigure}%
    \caption{Graph of error function $\psi$ in cubic $G^0$ case (left) and quartic $G^1$ case (right).}
    \label{fig:graph_psi_cubic_quartic}
\end{figure}

\begin{figure}[htbp]
    \centering
    \begin{subfigure}{.5\textwidth} 
        \centering
        \includegraphics[scale=0.8]{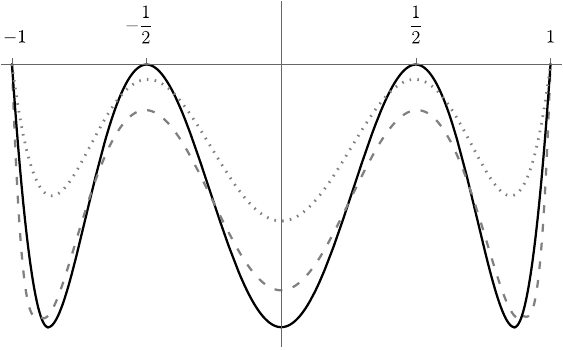}
    \end{subfigure}%
    \begin{subfigure}{.5\textwidth}
         \centering
         \includegraphics[scale=0.8]{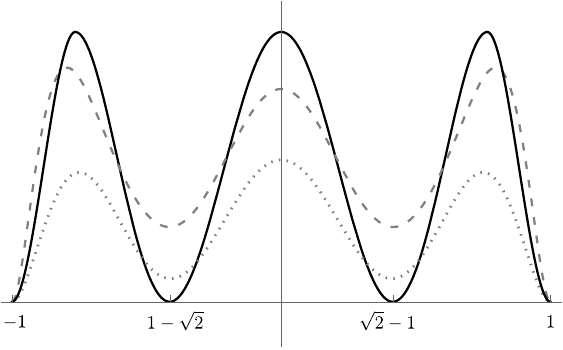}
    \end{subfigure}%
    \caption{Potential optimal approximants
    in the cubic $G^0$ case (left) and in the quartic $G^1$ case
    (right).}
    \label{fig:graph_psi_intersections}
\end{figure}

\begin{lem}
\label{lem:cubicG0}
    If $\overbar{\xi}>\xi^\lor$, 
    $\overbar{\eta}\leq \eta^\lor$ and 
    $\psi'(1,\overbar{\xi},\overbar{\eta})< \psi'(1,\xi^\lor,\eta^\lor)$, then 
    $\psi\left(\tfrac{1}{2},\overbar{\xi},\overbar{\eta}\right)>0$ and
    $\psi\left(\cdot,\overbar{\xi},\overbar{\eta}\right)$ is not an inner approximant.
\end{lem}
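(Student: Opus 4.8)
The two conclusions are linked, and the second is immediate from the first: an inner approximant requires $\psi\le 0$ on all of $[-1,1]$, so as soon as we know $\psi(\tfrac12,\overbar{\xi},\overbar{\eta})>0$ the induced curve cannot be inner. Hence the whole lemma reduces to the single inequality $\psi(\tfrac12,\overbar{\xi},\overbar{\eta})>0$, and my first step is to evaluate $\psi$ at $t=\tfrac12$ in closed form. Writing $s=\sqrt{1-c^2}$ and substituting $t^2=\tfrac14$, $t^4=\tfrac1{16}$ into the expression for $\psi$, a direct computation yields $\psi(\tfrac12,\xi,\eta)=-\tfrac{3}{1024}F(\xi,\eta)$ with
\[
  F(\xi,\eta)=-27\eta^2-78\eta s-108\xi^2-168\xi c+285-9c^2,
\]
so that $\psi(\tfrac12,\xi,\eta)>0$ is equivalent to $F(\xi,\eta)<0$. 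Since $t=\tfrac12$ is a double zero of $\psi(\cdot,\xi^\lor,\eta^\lor)$, we also have $F(\xi^\lor,\eta^\lor)=0$.

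The decisive structural observation is that $F$ is a strictly concave quadratic: its Hessian is diagonal with negative entries $-216$ and $-54$. Therefore $F$ lies below each of its tangent planes, and in particular
\[
  F(\overbar{\xi},\overbar{\eta})\le F(\xi^\lor,\eta^\lor)+A\,u+B\,v=A\,u+B\,v,
\]
where $u=\overbar{\xi}-\xi^\lor$, $v=\overbar{\eta}-\eta^\lor$, and $A=\partial_\xi F(\xi^\lor,\eta^\lor)=-216\xi^\lor-168c$, $B=\partial_\eta F(\xi^\lor,\eta^\lor)=-54\eta^\lor-78s$. Using $\xi^\lor>0$ and $\eta^\lor>0$ (the latter from \Cref{lem:etaG30}) gives $A<0$ and $B<0$. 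The three hypotheses translate as $u>0$ (from $\overbar{\xi}>\xi^\lor$), $v\le 0$ (from $\overbar{\eta}\le\eta^\lor$), and — rewriting $\psi'(1,\cdot)$ via \eqref{eq:cubicpsi_lc_0_der1} — the derivative condition becomes $c\,u+s\,v>0$, i.e.\ $-v<\tfrac{c}{s}\,u$.

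The point to watch is the term $B\,v$: because $B<0$ and $v\le 0$ it is nonnegative, so it works against us, and this is exactly where the derivative hypothesis must be spent. Writing $B\,v=|B|\,|v|$ and using $|v|<\tfrac{c}{s}u$ gives $A\,u+B\,v<u\bigl(\tfrac{c|B|}{s}-|A|\bigr)$, so it suffices to prove the scalar inequality $\tfrac{c|B|}{s}<|A|$, that is
\[
  \frac{c\,(54\eta^\lor+78s)}{s}<216\,\xi^\lor+168c.
\]
To close it I would invoke the a priori bounds from the earlier lemmas: $\xi^\lor>\xi_{min}>1$ (from \Cref{eq:g1_cubic} and the note $c<1<\xi_{min}$) and $\eta^\lor<\eta_{max}=\tfrac{(3-c)s}{3(1+c)}$ (from \Cref{lem:etaG30}). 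Substituting these, the left-hand side is bounded above by $\tfrac{18c(3-c)}{1+c}+78c$ and the right-hand side exceeds $216+168c$, so the claim reduces to $18c(3-c)\le(216+90c)(1+c)$, i.e.\ $0\le 216+252c+108c^2$, which is obvious on $[0,1)$.

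Assembling the pieces yields $F(\overbar{\xi},\overbar{\eta})<0$, hence $\psi(\tfrac12,\overbar{\xi},\overbar{\eta})>0$, and the approximant is not inner. The only genuinely delicate step is the control of the adverse term $B\,v$, which is precisely what the derivative hypothesis $\psi'(1,\overbar{\xi},\overbar{\eta})<\psi'(1,\xi^\lor,\eta^\lor)$ is for; everything else is the closed-form evaluation of $F$ and a routine one-variable polynomial inequality that leans on the localization of $\xi^\lor$ and $\eta^\lor$ already proven.
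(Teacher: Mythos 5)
Your proof is correct, and I verified the key computation: with $s=\sqrt{1-c^2}$ one indeed gets $\psi\left(\tfrac12,\xi,\eta\right)=-\tfrac{3}{1024}F(\xi,\eta)$ with your $F(\xi,\eta)=-27\eta^2-78\eta s-108\xi^2-168\xi c+285-9c^2$, which is $-\tfrac13$ times the left-hand side of the paper's ellipse equation \eqref{eq:G30_ellipse}, and your translation of the hypotheses into $u>0$, $v\le 0$, $cu+sv>0$ is exact. Your route is the algebraic twin of the paper's geometric argument, and the comparison is instructive. The paper first uses the assumption that $(\overbar{\xi},\overbar{\eta})$ induces an inner approximant to get $\overbar{\xi}\le\xi_{max}$, so that the admissible pairs form a bounded triangle $\mathcal T$; it then views $\psi\left(\tfrac12,\cdot,\cdot\right)=0$ as the ellipse $\mathcal E$, invokes its orientation (centre in the third quadrant, hence concave and decreasing in the first quadrant), and shows $\mathcal T$ lies outside $\mathcal E$ by comparing the slope of the tangent line to $\mathcal E$ at $(\xi^\lor,\eta^\lor)$ with the slope of the triangle's lower edge; that comparison is the inequality $\sqrt{1-c^2}(12\xi^\lor+5c)-3c\eta^\lor>0$, checked at the corners of $I\times J$. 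Your supporting-hyperplane step $F(\overbar{\xi},\overbar{\eta})\le Au+Bv$ (valid since the Hessian of $F$ is negative definite) replaces both the triangle and the ellipse geometry in one stroke, and your scalar inequality $c\,(54\eta^\lor+78s)/s<216\xi^\lor+168c$ is, after division by $18$, \emph{exactly} the paper's slope inequality, proved from the same localization facts ($\xi^\lor>\xi_{min}>1$ from \Cref{eq:g1_cubic}, $\eta^\lor<\eta_{max}$ from \Cref{lem:etaG30}). What your version buys: it never needs the bound $\overbar{\xi}\le\xi_{max}$, so the conclusion holds for every $\overbar{\xi}>\xi^\lor$ and not only for parameters that could a priori give an inner approximant; and it replaces the qualitative claims about the ellipse, which the paper leaves to the reader and to \Cref{fig:ellipse} (left), by a short explicit chain of inequalities in which strictness is cleanly accounted for (it comes from $|B|>0$ together with $|v|<\tfrac{c}{s}u$). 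What the paper's version buys is the geometric picture explaining \emph{why} the three hypotheses push the point outside the ellipse, at the cost of an extra hypothesis and some unproved qualitative geometry.
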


\begin{proof}
    Since the pair of parameters $(\overbar{\xi},\overbar{\eta})$ provides
    an inner approximant, we have $\overbar{\xi}\leq \xi_{max}$. The
    assumptions of the lemma then imply
    \begin{equation}\label{eq:G30_triangle}
       \overbar{\xi}\in\left(\xi^\lor,\xi_{\max}\right],\quad 
       \overbar{\eta}\in \left(
       -\frac{c}{\sqrt{1-c^2}}\overbar{\xi}
       +\frac{c}{\sqrt{1-c^2}}\xi^\lor+\eta^\lor,\eta^\lor\right].
    \end{equation}
   Thus $(\overbar{\xi},\overbar{\eta})\in {\mathcal T}$,
   where ${\mathcal T}$ is the triangle given by \eqref{eq:G30_triangle}
   (see \Cref{fig:ellipse} (left)).
   We need to show that $\psi(\tfrac{1}{2},\overbar{\xi},\overbar{\eta}) > 0$
   for $(\overbar{\xi},\overbar{\eta}) \in \mathcal{T}$,
   or, equivalently, that $\mathcal{T}$ lies outside of the ellipse $\mathcal{E}$
   given by $\psi(\tfrac{1}{2},\overbar{\xi},\overbar{\eta})=0$. 
   The implicit equation of the ellipse reads 
    \begin{equation}
        \label{eq:G30_ellipse}
        4(9\overbar{\xi}+7c)^2+(9\overbar{\eta}+13\sqrt{1-c^2})^2-1024= 0,
    \end{equation}
    revealing that its centre is in the third quadrant. Thus, it is concave and decreasing in the first quadrant. The triangle ${\mathcal T}$ is above 
    the line $\ell_{\mathcal T}$ given by the equation
    \begin{equation}
        \label{eq:line_ell_T}
        \eta = -\frac{c}{\sqrt{1-c^2}}\xi 
        + \frac{c}{\sqrt{1-c^2}}\xi^\lor + \eta^\lor.
    \end{equation}
    Let $\ell_{\mathcal E}$ be the tangent line of the ellipse ${\mathcal E}$ at
    $(\xi^\lor,\eta^\lor)$. Note that the lines 
    $\ell_{\mathcal T}$ and $\ell_{\mathcal E}$
    intersect in $(\xi^\lor,\eta^\lor)$
    (see \Cref{fig:ellipse} (left)).
    The result of the lemma will follow if we prove that the slope
    of $\ell_{\mathcal T}$ is greater than the slope of 
    $\ell_{\mathcal E}$
    since then the triangle ${\mathcal T}$ is out of the ellipse ${\mathcal E}$. Both slopes can be easily computed using 
    \eqref{eq:G30_ellipse} and \eqref{eq:line_ell_T} and we have to
    verify the inequality 
    $$
    \sqrt{1-c^2}(12\xi^\lor+5c)-3c\eta^\lor>0.
    $$
    But it follows from the fact that $\sqrt{1-c^2}(12\xi+5c)-3c\eta>0$
    for $(\xi,\eta)\in I\times J$ (we simply check the inequality at the
    corner points of $I\times J$) and the fact that 
    $(\xi^\lor,\eta^\lor)\in I\times J$.
\end{proof}

\begin{figure}[htbp]
    \centering
    \begin{subfigure}{.5\textwidth} 
        \centering
        \includegraphics[scale=0.8]{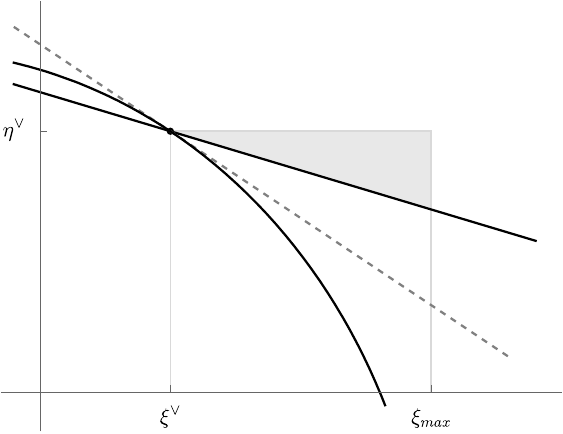}
    \end{subfigure}%
    \begin{subfigure}{.5\textwidth}
         \centering
         \includegraphics[scale=0.8]{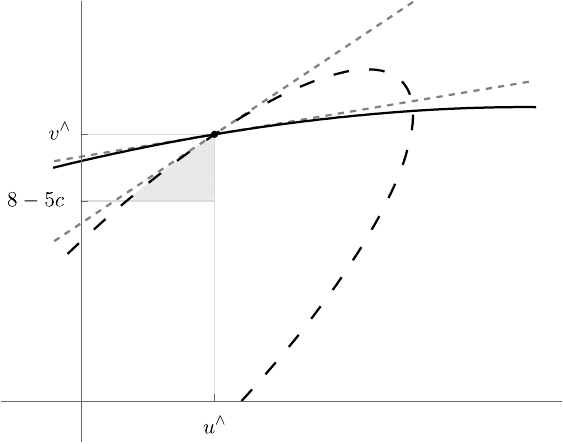}
    \end{subfigure}%
    \caption{Schematic depiction of an ellipse and line segment in proof of lemma \ref{lem:cubicG0} (left) and of parabola and ellipse in proof of lemma \ref{lem:quarticG1} (right).}
    \label{fig:ellipse}
\end{figure}
Let us summarize the results in the following theorem.

\begin{thm}
    The optimal inner cubic $G^0$ approximant of a circular arc is given by
    the pair of parameters $(\xi^\lor,\eta^\lor)\in I\times J$,
    where $\xi^\lor$ is a unique zero of $g_1$ in $I$ and $\eta^\lor$
    is a unique zero of $g_2(\cdot,\xi^\lor)$ in $J$.
\end{thm}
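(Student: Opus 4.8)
The plan is to treat the theorem as the capstone that assembles the preceding analysis, organised in three stages: produce the candidate $(\xi^\lor,\eta^\lor)$ and show it is unique, verify it is a genuine inner approximant, and establish its optimality. For the first stage I would start from the equioscillation ansatz encoded in the system \eqref{eq:cubic_G0_system}, whose Gr\"obner basis for the ordering $\eta\prec\xi$ splits off the inadmissible factor $\xi-c$ and reduces the problem to the positive zeros of $g_1$ together with the linear relation $g_2(\xi,\eta)=0$. \Cref{eq:g1_cubic} supplies the unique positive zero $\xi^\lor\in I$, solving $g_2(\xi^\lor,\cdot)=0$ yields $\eta^\lor$ via \eqref{eq:eta_opt_G30}, and \Cref{lem:etaG30} places $\eta^\lor\in J$. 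This pins down $(\xi^\lor,\eta^\lor)$ uniquely.

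For admissibility I would exploit that \eqref{eq:cubic_G0_system} forces $t=\pm\tfrac12$ to be double zeros of $\psi(\cdot,\xi^\lor,\eta^\lor)$ while $t=\pm1$ are already simple zeros; as $\psi$ has degree six these exhaust its roots, so
\[
\psi(t,\xi^\lor,\eta^\lor)=\text{lc}(\psi(\cdot,\xi^\lor,\eta^\lor))\,(t^2-1)\left(t^2-\tfrac14\right)^2 .
\]
Since $\eta^\lor\in J$ lies strictly above $\eta_{min}=\tfrac{\sqrt{1-c^2}}3$, the leading coefficient $\tfrac1{16}(\sqrt{1-c^2}-3\eta^\lor)^2$ is strictly positive, and $t^2-1\le0$ on $[-1,1]$ gives $\psi(\cdot,\xi^\lor,\eta^\lor)\le0$ there; the candidate is thus inner and equioscillating.

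The optimality stage is an argument by contradiction: I would assume a pair $(\overbar{\xi},\overbar{\eta})$ gives a strictly smaller value of $\max_{[-1,1]}|\psi|$ and eliminate every possible location of $(\overbar{\xi},\overbar{\eta})$ relative to $(\xi^\lor,\eta^\lor)$, using as diagnostics the three quantities in \eqref{eq:cubicpsi_lc_0_der1}. The value $\psi(0,\cdot)$ excludes $\overbar{\xi}<\xi^\lor$ outright. The remaining cases hinge on a crossing count for which the cleanest bookkeeping is to note that $\psi(\cdot,\xi,\eta)$ is even, i.e. a cubic in $u=t^2$; the difference of two such functions is again a cubic in $u$ and so has at most three zeros in $u$, equivalently at most six zeros in $t$. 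A mismatch of the leading coefficient or of the slope $\psi'(1,\cdot)$ then forces $\psi(\cdot,\overbar{\xi},\overbar{\eta})$ and $\psi(\cdot,\xi^\lor,\eta^\lor)$ to cross three times in $u\in(0,1)$ and at least once more beyond $u=1$ (that is, outside $[-1,1]$), which is one zero too many. This disposes of $\overbar{\eta}>\eta^\lor$ and, by the same principle, of $\overbar{\xi}=\xi^\lor,\ \overbar{\eta}<\eta^\lor$, leaving only $\overbar{\xi}>\xi^\lor,\ \overbar{\eta}\le\eta^\lor$ with $\psi'(1,\overbar{\xi},\overbar{\eta})<\psi'(1,\xi^\lor,\eta^\lor)$.

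I expect this last configuration, handled by \Cref{lem:cubicG0}, to be the main obstacle, since there no single diagnostic produces a contradiction and one must instead control $\psi(\tfrac12,\overbar{\xi},\overbar{\eta})$ directly. My plan mirrors the lemma: confine $(\overbar{\xi},\overbar{\eta})$ to the triangle $\mathcal T$ of \eqref{eq:G30_triangle} and show it lies outside the ellipse $\mathcal E$ of \eqref{eq:G30_ellipse} on which $\psi(\tfrac12,\cdot)=0$. Because $\mathcal E$ has its centre in the third quadrant and is therefore concave and decreasing in the first quadrant, it suffices to compare, at the shared point $(\xi^\lor,\eta^\lor)$, the slope of the triangle's edge line $\ell_{\mathcal T}$ with that of the tangent $\ell_{\mathcal E}$; this collapses to the single inequality $\sqrt{1-c^2}(12\xi^\lor+5c)-3c\eta^\lor>0$, which I would verify on all of $I\times J$ by checking the four corners and invoking $(\xi^\lor,\eta^\lor)\in I\times J$. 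Assembling the three stages then yields the claimed characterisation.
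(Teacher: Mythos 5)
Your proposal is correct and follows the paper's own proof essentially step for step: the same Gr\"obner-basis reduction (Lemmas \ref{eq:g1_cubic} and \ref{lem:etaG30}) pins down $(\xi^\lor,\eta^\lor)\in I\times J$, the same diagnostics $\psi(0,\cdot,\cdot)$, $\text{lc}(\psi)$ and $\psi'(1,\cdot,\cdot)$ drive the same case-by-case contradiction argument, and the last case is settled by the identical triangle-versus-ellipse slope comparison of Lemma \ref{lem:cubicG0}. Your two small additions --- the explicit factorization $\psi(t,\xi^\lor,\eta^\lor)=\text{lc}(\psi(\cdot,\xi^\lor,\eta^\lor))\,(t^2-1)\left(t^2-\tfrac{1}{4}\right)^2$ confirming that the candidate really is inner, and recasting the intersection counts as zero counts of a cubic in $u=t^2$ --- are sound and equivalent to the paper's bookkeeping.
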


\subsection{Optimal outer approximant}
We will show that the optimal outer approximant is $G^2$ approximant. From the equations $\psi(0,\xi^\land,\eta^\land)=0$ and $\psi'(1,\xi^\land,\eta^\land)=0$ we get $\xi^\land=\tfrac{4-c}3$ and $\eta^\land=\tfrac{3-4 c+c^2}{3 \sqrt{1-c^2}}$. Suppose that a pair $(\overbar{\xi},\overbar{\eta})$ induces a better outer approximant. From $\psi(0,\overbar{\xi},\overbar{\eta})\ge \psi(0,\xi^\land,\eta^\land)=0$ we get $\overbar{\xi}\ge \xi^\land$. From 
$$
1-\sqrt{1-c^2}\overbar{\eta}-c \overbar{\xi}=
\tfrac 1 3\psi'(1,\overbar{\xi},\overbar{\eta})\le 
\tfrac 1 3 \psi'(1,\xi^\land,\eta^\land)=1-\sqrt{1-c^2}\eta^\land-c \xi^\land=0
$$ 
we get $\overbar{\xi}\ge \tfrac{1}{c}\left(\sqrt{1-c^2}(\eta^\land-\overbar{\eta})+c\xi^\land \right)$. From 
\begin{multline*}
\tfrac{1}{108}\left((3\overbar{\eta}+5\sqrt{1-c^2})^2+27(\overbar{\xi}+c)^2-108\right)=\psi(\tfrac{\sqrt{3}}{3},\overbar{\xi},\overbar{\eta})\\
\le\max\psi(\cdot,\xi^\land,\eta^\land)=\psi(\tfrac{\sqrt{3}}{3},\xi^\land,\eta^\land)=\tfrac{1}{108}\left((3\eta^\land+5\sqrt{1-c^2})^2+27(\xi^\land+c)^2-108\right)
\end{multline*}
we get $\overbar{\eta}\le \eta^\land$. From the last two inequalities we get
\begin{align*}
 0&\le (3\eta^\land+5\sqrt{1-c^2})^2+27(\xi^\land+c)^2-(3\overbar{\eta}+5\sqrt{1-c^2})^2-27(\overbar{\xi}+c)^2\\
 &\ge (3\eta^\land+5\sqrt{1-c^2})^2+27(\xi^\land+c)^2-(3\overbar{\eta}+5\sqrt{1-c^2})^2-27(\tfrac{1}{c}\left(\sqrt{1-c^2}(\eta^\land-\overbar{\eta})+c\xi^\land \right)+c)^2\\
 &=-\tfrac{3}{c^2}\left(\eta^\land-\overbar{\eta}\right)\left((9-6c^2)(\eta^\land-\overbar{\eta})+2c^2(4\sqrt{1-c^2}-3\eta^\land)+18c\sqrt{1-c^2}\xi^\land \right)\ge0.
\end{align*}
The last inequality follows since $\overbar{\eta}\le \eta^\land\le \tfrac{4}{3}\sqrt{1-c^2}$. Hence all inequalites are equalities, so $(\overbar{\xi},\overbar{\eta})=(\xi^\land,\eta^\land)$.

\section{Optimal quartic $G^1$ approximants}
\label{sec:G41}
In the $G^1$ quartic case the control points of an approximant $\bfm{p}$ are given as 
\begin{equation*}
    \bfm{b}_0 = (c,-s)^T, \bfm{b}_1 = (c,-s)^T+\xi (s,c)^T, 
    \bfm{b}_2=\left(\eta,0\right)^T,
    \bfm{b}_3 = (c,s)^T+\xi(s,-c)^T, \bfm{b}_4 = (c,s)^T,
\end{equation*}
where $\xi > 0$ due to the $G^1$ continuity.
The function $\psi$ is again a polynomial of degree $8$ and can be written as
\begin{equation*}
    \label{quartic1_psi}
    \psi(t,\xi,\eta)=(t^2-1)^2\left(a_2(\xi,\eta)t^4+a_1(\xi,\eta)t^2+a_0(\xi,\eta)\right),
\end{equation*}
where
\begin{align*}
    a_2(\xi,\eta)&=\text{lc}(\psi(\cdot,\xi,\eta))= \frac{1}{64}\left(3\eta-4\sqrt{1-c^2}\xi-3c\right)^2,\\
    a_1(\xi,\eta)&= \frac{1}{64}\left(32c^2\xi^2+32\xi^2-64c\sqrt{1-c^2}\xi -18\eta^2+36c\eta-34c^2+16\right),\\
    a_0(\xi,\eta)&= \psi(0,\xi,\eta) = \frac{1}{64}\left((3\eta+4\sqrt{1-c^2}\xi+5c)^2 -64\right),
\end{align*}
and its second derivative is $\psi''(1,\xi,\eta) = 8\xi^2+6c\eta-6$.

\subsection{Optimal inner approximant}
It will turn out that the optimal inner quartic $G^1$ approximant is
actually the optimal $G^2$ approximant from 
\Cref{subsec:G24_inner}. Recall that, in this case, we have
$\psi(0,\xi,\eta)=0$ and $\psi''(1,\xi,\eta)=0$,
and the appropriate solution which induces the best approximant
is given by
\begin{equation*}
  \xi^\lor=\frac{c \left(1-c^2\right)+(1-c)^2 \sqrt{3+4 c+c^2}}
  {2 \sqrt{1-c^2}},\quad 
  \eta^\lor=\frac{1}{3} \left(8-7 c+2 c^3-2 (1-c)^2 \sqrt{3+4 c+c^2}\right),
\end{equation*}
with the error $\frac{1}{1024}\left(27 (1-c)^4 \left((2+c)-\sqrt{3+4 c+c^2}\right)^2\right)\le \frac{27 \left(2-\sqrt{3}\right)^2}{1024}$.
Suppose that a pair $(\overbar{\xi},\overbar{\eta})\ne (\xi^\lor,\eta^\lor)$ induces the optimal inner $G^1$ approximant. By \Cref{subsec:G24_inner}, we know that the induced approximant is not a $G^2$ approximant. First, we show $\overbar{\eta}\ge 0$.

Suppose that $\overbar{\eta}<0$. Since $\psi(0,\overbar{\xi},\overbar{\eta})\ge \min_{t\in[-1,1]} \psi(\cdot,\xi^\lor,\eta^\lor)=:m$, we have $3\overbar{\eta}+4\sqrt{1-c^2}\overbar{\xi}+5c<-8\sqrt{1+m}<-2$. Then
\begin{multline*}
\text{lc}(\psi(\cdot,\overbar{\xi},\overbar{\eta}))=\frac{1}{64}\left(-3\overbar{\eta}+4\sqrt{1-c^2}\overbar{\xi}+3c\right)^2\ge \frac{1}{64}\left(8\sqrt{1-c^2} \overbar{\xi}+8c+2\right)^2
\ge \frac{1}{64}\left(8c+2\right)^2\\
>\frac{1}{4} (1-c)^4 \left(7+8 c+2 c^2-2 (2+c) \sqrt{3+4 c+c^2}\right)=\text{lc}(\psi(\cdot,\xi^\lor,\eta^\lor)).
\end{multline*}
Together with the inequality $\psi''(1,\overbar{\xi},\overbar{\eta})<\psi''(1,\xi^\lor,\eta^\lor)=0$ we get that $\psi(t,\overbar{\xi},\overbar{\eta})<\psi(t,\xi^\lor,\eta^\lor)<0$
for all $t$ close but not equal to 1. Hence the graphs of $\psi(\cdot,\overbar{\xi},\overbar{\eta})$ and $\psi(\cdot,\xi^\lor,\eta^\lor)$ have one intersection on $(1,\infty)$ and one on $(-\infty,-1)$. Since they have at least double zeros at $\pm 1$, they have at most two intersections on $(-1,1)$, so there are no intersections on $(-1,1)$. Hence $(\overbar{\xi},\overbar{\eta})$ induces a worse $G^1$ inner approximant then $(\xi^\lor,\eta^\lor)$, hence $\overbar{\eta}\ge 0$.

Since $\overbar{\eta}\ge 0$ and $\psi(t,\overbar{\xi},\overbar{\eta})=b_2 {\overbar{\eta}}^2+b_1 \overbar{\eta}+b_0$, where 
$$
b_2=\tfrac 9{64}(1-t^2)^4> 0,\quad b_1=\tfrac{3}{32} \left(1-t^2\right)^2 \left(c \left(5+6 t^2-3 t^4\right)+4 \sqrt{1-c^2}\left(1-t^4\right) \overbar{\xi} \right)>0,
$$
for all $t\in (-1,1)$, the function $\psi(t,\cdot,\overbar{\eta})$ is increasing. If $\psi(0,\overbar{\xi},\overbar{\eta})<0$ we can enlarge $\overbar{\xi}$ and get better inner $G^1$ approximant. Hence $\psi(0,\overbar{\xi},\overbar{\eta})=0$ and therefore $\overbar{\eta}=\eta(\overbar{\xi})=\tfrac 1 3(8-4\sqrt{1-c^2}\overbar{\xi}-5c)$. If $\overbar{\xi}>\xi^\lor$ then $\psi(0,\overbar{\xi},\eta(\overbar{\xi}))>0$, so $(\overbar{\xi},\eta(\overbar{\xi}))$ does not induce an inner approximant. If $\overbar{\xi}<\xi^\lor$, then $\psi(\tfrac 1 2,\overbar{\xi},\eta(\overbar{\xi}))<\psi(\tfrac 1 2,\xi^\lor,\eta^\lor)=\min_{t\in[-1,1]} \psi(\cdot,\xi^\lor,\eta^\lor)$, hence $(\overbar{\xi},\overbar{\eta})$ induces worse approximant then $(\xi^\lor,\eta^\lor)$.

\subsection{Optimal outer approximant}
To simplify the coefficients of the polynomial $\psi$, 
we introduce two new variables
\begin{equation*}
    u = 3\eta-4\sqrt{1-c^2}\xi,\quad v = 3\eta+4\sqrt{1-c^2}\xi
\end{equation*}
which implies
$\xi = \tfrac{v-u}{8\sqrt{1-c^2}}$ and $\eta = \tfrac{1}{6}(u+v)$.
Now $\psi$ can be written as $\psi_{uv}(t,u,v)=\psi(t,\xi(u,v),\eta(u,v))$ and 
we get
\begin{equation}
    \label{eq:psi_lc_0_dd_quartic1_uv}
    \text{lc}(\psi_{uv}(\cdot,u,v))= \frac{1}{64}\left(u-3c\right)^2,\quad
    \psi_{uv}(0,u,v) = \frac{1}{64}\left((v+5c)^2 -64\right), \quad
    \psi_{uv}''(1,u,v) = \frac{(v-u)^2}{8(1-c^2)}+c(v+u)-6.
\end{equation}
Observe that $\xi>0$ due to the $G^1$ interpolation condition
and $\psi_{uv}(0,u,v)\geq 0$ since the approximant is the outer one. This implies
\begin{equation}\label{eq:uv_domain}
  v> u \quad{\rm and}\quad \left|v+5c\right|\geq 8.
\end{equation}
Let us first assume that $v\geq 8-5c$.\\
It is clear that $\psi_{uv}$ is symmetric and has two double zeros at $\pm 1$.
Following the idea of equioscillation of the best polynomial approximant in the functional
case, we might expect that $\psi_{uv}$ equioscillates. Since it is an outer
approximant, it may touch the abscissa several times. Thus, the guess is that its
graph looks like on \Cref{fig:graph_psi_cubic_quartic} (right). But then its four double
zeros on $[-1,1]$ must be related to the minima of the Chebyshev polynomial of degree eight.
Consequently, the two double zeros on $(-1,1)$ are equal to $1-\sqrt{2}$ and
$\sqrt{2}-1$. Since these points are also minima of $\psi_{uv}$, we must have
\begin{equation}\label{eq:system_quartic1_uv}
        \psi_{uv}\left(\sqrt{2}-1,u,v\right) = 0, \quad  
        \psi_{uv}'\left(\sqrt{2}-1,u,v\right) = 0. \quad
\end{equation}
This leads to the system of two algebraic equations for $u$ and $v$.
Its Gr\"obner basis according to the ordering $v\prec u$
are polynomials $p_1$ and $p_2$, given as
\begin{equation}
    \label{eq:polinomG}
    p_1(u) = \frac{1}{1-c^2} \left(\alpha_4u^4+\alpha_3u^3+\alpha_2u^2+\alpha_1u+\alpha_0\right),
\end{equation}
where 
\begin{align}
    \alpha_4&=c^2(c^2+1),\nonumber\\  
    \alpha_3&=-8(3+\sqrt{2})c^5+4(4+5\sqrt{2})c^3+4(4+3\sqrt{2})c,\nonumber\\  
    \alpha_2&=2(125+68\sqrt{2})c^6-2(229+182\sqrt{2})c^4+4(140+97\sqrt{2})c^2-4(65+46\sqrt{2}),\label{eq:alphas_p1}\\
    \alpha_1&= -88(14+9\sqrt{2})c^7+12(116+89\sqrt{2})c^5-84(16+11\sqrt{2})c^3
    +4(50+36\sqrt{2})c,\nonumber\\
    \alpha_0&= 5(457+312\sqrt{2})c^8-(1655+1236\sqrt{2})c^6
    +12(16+9\sqrt{2})c^4+4(163+114\sqrt{2})c^2+272+192\sqrt{2},\nonumber
\end{align}
and
\begin{equation*}
    p_2(u,v) = \frac{1}{c^2-1}\left(\beta_1 v+\beta_0(u)\right),
\end{equation*}
where
\begin{align*}
    \beta_1&= -2 \left(
    4 c^8
    +4\left(4+\sqrt{2}\right) c^6
    -\left(1+6 \sqrt{2}\right) c^4
    +2 c^2
    +3+2\sqrt{2}
    \right),\\
    \beta_0(u)&= 36 \left(3-2 \sqrt{2}\right) c^9+16 \left(3 \sqrt{2}-5\right) c^8
   u+\left(329-198 \sqrt{2}+\left(12-8 \sqrt{2}\right) u^2\right) c^7+\left(142
   \sqrt{2}-253\right) c^6 u\\
   &+\left(169-198 \sqrt{2}+\left(55-34 \sqrt{2}\right)
   u^2\right) c^5+\left(174 \sqrt{2}-51+\left(2 \sqrt{2}-3\right) u^2\right) c^4
   u+\left(4 \left(43+9 \sqrt{2}\right)-5 \left(6 \sqrt{2}-7\right) u^2\right)
   c^3\\
   &-\left(16 \left(7+3 \sqrt{2}\right) u+\left(3-2 \sqrt{2}\right) u^3\right)
   c^2+2 \left(23+16 \sqrt{2}\right) u-4 c \left(10+9 \sqrt{2}+\sqrt{2}
   u^2\right).
\end{align*}

\begin{figure}
    \centering
    \includegraphics[scale=0.9]{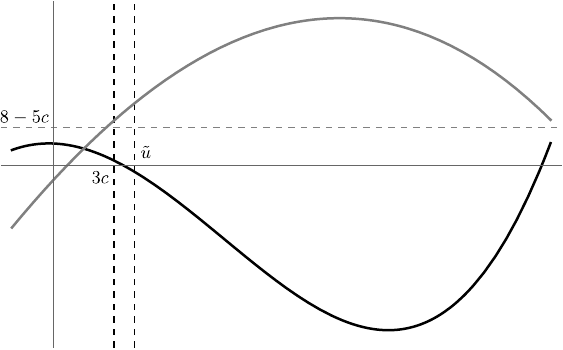}
    \caption{Graphs of polynomials $p_1$ (black) and $-\tfrac{\beta_0}{\beta_1}$ (gray).}
    \label{fig:graphp1p2}
\end{figure}
\noindent Let us define the domain of interest for $(u,v)$ as
\begin{equation*}
    {\mathcal D}=(3c,\Tilde{u})\times[8-5c,\infty),
\end{equation*}
where
  \begin{equation*}
      \Tilde{u}=kc^2-(2k-3)c+k, \quad k=2 \sqrt{\frac{2\sqrt{2}-1}{7}}.
  \end{equation*}
Note that ${\mathcal D}$ is a subset of \eqref{eq:uv_domain}.
We shall prove in the following that there exists a unique 
solution $(u^\land,v^\land)\in{\mathcal D}$ of \eqref{eq:system_quartic1_uv} 
with $u^\land$ the unique zero of $p_1$ on $I=(3c,\Tilde{u})$
and 
$v^\land=-\tfrac{\beta_0(u^\land)}{\beta_1}\geq 8-5c$.
Moreover, this solution is {\it optimal}, i.e., it induces
the polynomial approximant $\bfm{p}$ with 
$\psi_{u^\land v^\land}$ for which 
\begin{equation*}
  (u^\land,v^\land)=
  \underset{\substack{(u,v)}}{\rm argmin}
  \left(\max_{t\in[-1,1]}\left|\psi_{uv}(t,u,v)\right|\right).
\end{equation*}
Let us first confirm the following lemma.

\begin{lem}\label{lem:uhat}
    The polynomial $p_1$ has precisely one zero $u^\land$  
    on $I$, and the system \eqref{eq:system_quartic1_uv}
    has precisely one solution $(u^\land,v^\land)$ in ${\mathcal D}$.
\end{lem}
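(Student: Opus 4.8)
The plan is to exploit the Gr\"obner basis structure of \eqref{eq:system_quartic1_uv}: since $p_2(u,v)=\tfrac{1}{c^2-1}(\beta_1 v+\beta_0(u))$ is affine in $v$, every solution of the system has the form $(u,-\beta_0(u)/\beta_1)$ with $p_1(u)=0$, provided $\beta_1\neq 0$. The statement therefore splits into two tasks: first, show that the quartic $p_1$ has exactly one zero $u^\land$ on $I=(3c,\tilde u)$; second, show that the associated $v^\land=-\beta_0(u^\land)/\beta_1$ satisfies $v^\land\ge 8-5c$, so that $(u^\land,v^\land)$ lands in $\mathcal D$ and, the $u$-coordinate of any solution in $\mathcal D$ being a zero of $p_1$ in $I$, is the only solution there.

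For the first task I would mimic the argument of \Cref{eq:g1_cubic}. Because $\tfrac{1}{1-c^2}>0$ for $c\in[0,1)$, the sign of $p_1$ agrees with that of its numerator, whose leading coefficient $\alpha_4=c^2(c^2+1)$ is positive for $c>0$ (the degenerate case $c=0$, where the numerator collapses to a downward quadratic, I would treat separately). I would evaluate $p_1$ at the endpoints $u=3c$ and $u=\tilde u$ and verify that the values have opposite signs, so the intermediate value theorem yields at least one zero in $I$. The choice of $\tilde u$, and hence of $k=2\sqrt{(2\sqrt2-1)/7}$, should be exactly what pins down the sign of $p_1(\tilde u)$, so I expect that evaluation to simplify despite the nested radical.

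Uniqueness is the delicate point. Unlike the cubic of \Cref{eq:g1_cubic}, a quartic can carry four real roots, so an endpoint sign change alone does not isolate a single one. I would show $p_1$ has at most one zero on $I$ either by establishing strict monotonicity there (i.e.\ that the cubic $p_1'$ keeps a constant sign on $I$, checked by locating its critical points), or, following the cubic template more literally, by evaluating $p_1$ at a few additional strategic abscissae to account for all four roots and confirm that exactly one lies in $(3c,\tilde u)$ while the others lie left of $3c$ or are non-real. For the second task I would first confirm $\beta_1\neq 0$ on $[0,1)$ (its bracket has positive constant term $3+2\sqrt2$ and no sign change for $c<1$), and then reduce $v^\land\ge 8-5c$ to the polynomial inequality $\beta_0(u^\land)+\beta_1(8-5c)\ge 0$, with the sign of $\beta_1$ accounted for. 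Since $u^\land\in I$, it suffices to verify this inequality for all $u\in I$, which is precisely what \Cref{fig:graphp1p2} suggests, the curve $-\beta_0/\beta_1$ staying above the level $8-5c$ on $I$.

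The main obstacle will be the algebraic weight: the coefficients $\alpha_i,\beta_i$ are high-degree polynomials in $c$ laced with $\sqrt2$, and $\tilde u$ carries a nested radical, so both the endpoint sign determinations and the root count demand careful symbolic simplification. I expect the uniqueness count for the quartic to be the crux, and I would lean on the special value of $k$ to keep the evaluations at $u=\tilde u$ tractable.
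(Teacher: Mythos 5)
Your proposal is correct and, in its second variant for the uniqueness step, is essentially the paper's own proof: the paper reduces the system to $p_1$ via the linearity of $p_2$ in $v$, then uses CAS-assisted sign evaluations of $p_1$ at the four points $-6\sqrt{2}-5$, $0$, $3c$, $\tilde{u}$ (each value rewritten as a manifestly sign-definite sum of powers of $c$ and $1-c$) to locate all four real roots of the quartic and conclude that exactly one lies in $I$. Two details worth noting: the fourth root actually lies to the right of $\tilde{u}$ (not left of $3c$ or off the real line, as you guessed), which your root-accounting method would uncover anyway; and your concluding check that $v^\land=-\beta_0(u^\land)/\beta_1\ge 8-5c$ goes beyond the paper, whose proof leaves that inequality implicit (it is suggested only by \Cref{fig:graphp1p2}), so carrying it out would make your argument more complete than the published one.
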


\begin{proof}
    By \eqref{eq:polinomG} and \eqref{eq:alphas_p1}
    the leading coefficient of $p_1$ is 
    $\tfrac{c^4+c^2}{1-c^2}$, thus
    $\lim_{u\to \pm \infty}p_1(u)=\infty$.
    A computer algebra system reveals that
    $(1+c)p_1(-6\sqrt{2}-5)=\sum_{i=0}^7 a_i c^i(1-c)^{7-i}$,
    $(1-c^2)p_1(0)=\sum_{i=0}^4 b_i c^{2i}(1-c^2)^{4-i}$, 
    $p_1(3c)=16 \left(17+12 \sqrt{2}\right) \left(1-c^2\right)^3$, and
    $49(1+c)p_1(\Tilde{u})=16c(1-c)^3\sum_{i=0}^{7} d_i c^i(1-c)^{7-i}$,
    where $a_i<0$, $i=0,1,\dots,7$, $b_i>0$, $i=0,1,\dots,4$, and 
    $d_i<0$, $i=0,1,\dots,7$. Thus
    $p_1(-6\sqrt{2}-5)<0$, $p_1(0)>0$, $p_1(3c)>0$, and
    $p_1(\Tilde{u})<0$ and the result of the lemma follows
    since $p_1$ is quartic.
\end{proof}
\noindent Let us confirm that the solution $(u^\land,v^\land)$ induces the optimal
approximant. If not, then there exists a pair
  $(\overbar{u},\overbar{v})\in {\mathcal D}$, $(\overbar{u},\overbar{v})\neq(u^\land,v^\land)$, 
  for which
  $$
    \max_{t\in[-1,1]}\left|\psi_{uv}(t,\overbar{u},\overbar{v})\right|
   <
    \max_{t\in[-1,1]}\left|\psi_{uv}(t,u^\land,v^\land)\right|.
  $$
  If $\overbar{v}>v^\land$, then 
  $\psi_{uv}(0,\cdot,\overbar{v})>\psi_{uv}(0,\cdot,v^\land)$ since $\overbar{v}>8-5c$
  by \eqref{eq:uv_domain}, and $(u,\overbar{v})$ can not induce better approximant than
  $(u^\land,v^\land)$. Thus $\overbar{v}\leq v^\land$. 
  Consider first the case $\overbar{u}>u^\land$. By \Cref{lem:uhat}, $u^\land>3c$ and 
  \eqref{eq:psi_lc_0_dd_quartic1_uv} implies
  $\text{lc}(\psi_{uv}(t,\overbar{u},\overbar{v})) 
   > 
   \text{lc}(\psi_{uv}(t,u^{\land},v^{\land}))$.
   If $\psi_{uv}''(1,u^{\land},v^{\land})
   >
   \psi_{uv}''(1,\overbar{u},\overbar{v})$, then the graphs of $\psi_{uv}(\cdot,u^{\land},v^{\land})$ and 
   $\psi_{uv}(\cdot,\overbar{u},\overbar{v})$ intersect on $(1,\infty)$. Moreover, if $(\overbar{u},\overbar{v})$ induces a better approximant as
   $(u^\land,v^\land)$, then the graphs of $\psi_{uv}(\cdot,u^{\land},v^{\land})$ and 
   $\psi_{uv}(\cdot,\overbar{u},\overbar{v})$ would have to intersect at least eight times on $[-1,1]$,
   which is not possible since 
   $\psi_{uv}(\cdot,\overbar{u},\overbar{v})$ 
   is a polynomial of degree
   at most $8$ (see \Cref{fig:graph_psi_intersections} (right)).
   Similarly, if $\psi_{uv}''(1,u^{\land},v^{\land})
   \leq
   \psi_{uv}''(1,\overbar{u},\overbar{v})$, the above-mentioned graphs would have to 
    intersect at least ten times on $[-1,1]$ which is again a contradiction.
   Finally, let us consider the case $\overbar{v}\leq v^{\land}$ and 
   $\overbar{u} \leq u^{\land}$. 
If $\psi_{uv}''(1,u^{\land},v^{\land})\leq
\psi_{uv}''(1,\overbar{u},\overbar{v})$, then the graphs of $\psi_{uv}(\cdot,u^{\land},v^{\land})$ and 
$\psi_{uv}(\cdot,\overbar{u},\overbar{v})$ would have to intersect at least ten times on $[-1,1]$ which is again an obvious contradiction. 
So, we are left with the case 
$\psi_{uv}''(1,u^{\land},v^{\land})>\psi_{uv}''(1,\overbar{u},\overbar{v})$. The following lemma reveals
that, in this case, the approximant is not an outer one.

\begin{lem}
\label{lem:quarticG1}
    Let $\overbar{v}\leq v^{\land}$ and $\overbar{u}\leq u^{\land}$. If $\psi_{uv}''(1,u^{\land},v^{\land})
    >\psi_{uv}''(1,\overbar{u},\overbar{v})$, then 
    $\psi_{uv}(\sqrt{2}-1,\overbar{u},\overbar{v})<0$.
\end{lem}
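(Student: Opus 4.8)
The plan is to recast the inequality as a containment of two planar regions, in direct analogy with \Cref{lem:cubicG0}, the only difference being that the straight boundary there is now a parabola (see \Cref{fig:ellipse} (right)). Write $\psi_{uv}(t,u,v)=(t^2-1)^2q(t^2)$ with $q(x)=a_2x^2+a_1x+a_0$ and coefficients $a_i=a_i(u,v)$, so that $a_2=\text{lc}(\psi_{uv})$ and $a_0=\psi_{uv}(0,u,v)$, and set $\tau=(\sqrt2-1)^2=3-2\sqrt2$. Since $(\tau-1)^2=12-8\sqrt2>0$, the sign of $\psi_{uv}(\sqrt2-1,u,v)$ equals that of $q(\tau)$, so the claim is $q_{\overbar u\overbar v}(\tau)<0$. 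First I would record, from \eqref{eq:psi_lc_0_dd_quartic1_uv}, the identities $a_2=\tfrac{1}{64}(u-3c)^2$ (a function of $u$ alone), $a_0=\tfrac{1}{64}((v+5c)^2-64)$ (a function of $v$ alone), and $\psi_{uv}''(1,u,v)=8\,q(1)$; the last turns the hypothesis $\psi_{uv}''(1,u^\land,v^\land)>\psi_{uv}''(1,\overbar u,\overbar v)$ into $q_{\overbar u\overbar v}(1)<q_{u^\land v^\land}(1)$. A short computation shows that the degree-two part of $q(\tau)$ in $(u,v)$ is positive definite (its discriminant equals $-\tfrac{16\tau^2c^2}{1-c^2}$ up to a positive factor, hence is negative for $c\in(0,1)$), so $\mathcal E=\{q(\tau)=0\}$ is an ellipse whose convex interior is exactly $\{\psi_{uv}(\sqrt2-1,\cdot)<0\}$, and $(u^\land,v^\land)\in\mathcal E$ because \eqref{eq:system_quartic1_uv} forces $\tau$ to be a double zero of $q$. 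Likewise $\mathcal P=\{\psi_{uv}''(1,\cdot)=\psi_{uv}''(1,u^\land,v^\land)\}$ is a parabola through $(u^\land,v^\land)$ whose sublevel set $\{\psi_{uv}''(1,\cdot)<\psi_{uv}''(1,u^\land,v^\land)\}$ is convex.

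Carrying along the ambient restriction $(\overbar u,\overbar v)\in\mathcal D$ in force in the surrounding argument, the three hypotheses confine $(\overbar u,\overbar v)$ to the bounded convex set
\[
  \mathcal R=\mathcal D\cap\{u\le u^\land\}\cap\{v\le v^\land\}\cap\{\psi_{uv}''(1,\cdot)<\psi_{uv}''(1,u^\land,v^\land)\},
\]
and the goal becomes $\mathcal R\subseteq\operatorname{int}\mathcal E$. Here I would exploit that $q(\tau)$ is a strictly convex function of $(u,v)$: its maximum over the compact convex set $\overline{\mathcal R}$ is attained only at extreme points, and these are the finitely many vertices of $\overline{\mathcal R}$ together with the boundary arc of $\mathcal P$. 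It therefore suffices to prove $q(\tau)\le0$ at every extreme point with equality only at $(u^\land,v^\land)$; the strict hypothesis $q_{\overbar u\overbar v}(1)<q_{u^\land v^\land}(1)$ places $(\overbar u,\overbar v)$ strictly inside $\mathcal P$, hence $(\overbar u,\overbar v)\neq(u^\land,v^\land)$, and strict convexity then yields $q_{\overbar u\overbar v}(\tau)<0$.

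The two families of extreme points are controlled by the elementary identity
\[
  q(\tau)=\tau\big(q(1)-q_{u^\land v^\land}(1)\big)+(1-\tau)\big[\tau(a_2^\land-a_2)-(a_0^\land-a_0)\big],
\]
which follows from $q_{u^\land v^\land}(\tau)=0$, where $a_2^\land,a_0^\land$ are the values at $(u^\land,v^\land)$. On $\mathcal P$ the first term vanishes, so along the parabola arc $q(\tau)$ reduces to $(1-\tau)[\tau(a_2^\land-a_2)-(a_0^\land-a_0)]$; parametrising the arc by $u$ and using that $a_2,a_0$ increase with $u,v$, the required $q(\tau)\le0$ (with equality only at $u^\land$) follows once we show $\tfrac{d}{du}q(\tau)|_{\mathcal P}\ge0$ for $u\le u^\land$, i.e.\ that $(v+5c)\,v'(u)\ge\tau(u-3c)$ with $v'(u)=-\partial_u\psi_{uv}''(1,u,v)/\partial_v\psi_{uv}''(1,u,v)$ read off from \eqref{eq:psi_lc_0_dd_quartic1_uv}; strictness for $u<u^\land$ is automatic since $q(\tau)$ is nonconstant. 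At the remaining vertices, each lying on one of the lines $u=3c$, $v=8-5c$, $u=u^\land$, $v=v^\land$, the function $q(\tau)$ becomes a one-variable quadratic whose negativity is checked directly.

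I expect the parabola arc to be the main obstacle: it is the curved analogue of the single slope comparison that closed \Cref{lem:cubicG0}, and one must genuinely rule out $q(\tau)$ rising above its boundary value $0$ somewhere along the arc rather than only at the endpoint. After clearing the positive denominator $\partial_v\psi_{uv}''(1,u,v)$ and substituting the parabola relation $v=v(u)$, this collapses to a polynomial inequality in $c$ on $[0,1)$ which, as in the earlier lemmas, I would settle by factoring out manifestly positive quantities and verifying the reduced inequality at the endpoints of the relevant $u$- and $v$-ranges, treating the degenerate case $c=0$ as the limit used elsewhere in the paper.
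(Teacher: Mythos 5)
Your skeleton is sound up to the last step, and it is a genuinely different mechanism from the paper's: the paper compares the tangent slopes $c_p>c_e>0$ of the parabola and the ellipse throughout $\mathcal{D}$ and invokes concavity at the common point $(u^\land,v^\land)$, while you reduce a strictly convex function to the extreme points of a compact convex region. The pieces I can verify are correct: $\psi_{uv}''(1,u,v)=8q(1)$, the identity expressing $q(\tau)$ through $q(1)-q_{u^\land v^\land}(1)$, the discriminant $-\tfrac{16\tau^2c^2}{1-c^2}$ of the quadratic part of $q(\tau)$, and the arc criterion $(v+5c)v'(u)\ge\tau(u-3c)$, which in fact holds pointwise on all of $\mathcal{D}$ (the worst case $v=8-5c$, $u=\Tilde{u}$ reduces, with $\Tilde{u}-3c=k(1-c)^2$, to $8(4c+8-k)\ge\tau k(1-c)\left(8-k(1-c)+4c+4c^2\right)$, true with an enormous margin), so the parabola-arc part of your plan closes.

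The genuine gap is the vertex step: the vertices of $\overline{\mathcal R}$ not lying on the parabola cannot be handled by a ``direct'' one-variable quadratic check, because they involve the implicitly defined $(u^\land,v^\land)$ and the inequalities there are far too tight for the only bounds you have. Concretely, at $c=0$ one has $a_2=\tfrac{u^2}{64}$, $a_1=\tfrac{16-2uv}{64}$, $a_0=\tfrac{v^2-64}{64}$, so on the bottom edge $v=8-5c=8$,
\begin{equation*}
  q(\tau)\big|_{v=8}=\frac{\tau^2u^2}{64}+\frac{\tau(1-u)}{4}.
\end{equation*}
This is \emph{positive} at the $\mathcal{D}$-corner $u=3c=0$ (value $\tau/4$) and still positive at $u=1$ (value $\tau^2/64$), and becomes negative only for $u$ beyond roughly $1.02$; meanwhile $u^\land=\Tilde{u}=k\approx 1.0222$ at $c=0$, giving $q(\tau)\approx-4.7\times10^{-4}$ at the true vertex $(u^\land,8-5c)$, versus $+4.6\times10^{-4}$ at the admissible nearby value $u=1$. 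Two consequences. First, the corner $(3c,8-5c)$ lies \emph{outside} your ellipse, so the lemma would be false if it belonged to $\overline{\mathcal R}$; it is excluded only by the parabola constraint $f<0$, and proving that exclusion requires quantitative knowledge of the constant $\psi_{uv}''(1,u^\land,v^\land)$, which your plan never uses. Second, the vertex $(u^\land,8-5c)$ is always an extreme point of $\overline{\mathcal R}$ (since $\partial_v f>0$ by \eqref{eq:psi_lc_0_dd_quartic1_uv} and $v>u$), and its check cannot be deduced from $u^\land\in(3c,\Tilde{u})$ alone, since $q(\tau)$ changes sign inside that interval. Any repair must exploit the defining equations \eqref{eq:system_quartic1_uv} of $(u^\land,v^\land)$ — for instance, show $\partial_v q(\tau)>0$ on $\mathcal{D}$ (which is true) and slide down the edge $u=u^\land$ from the corner, where $q(\tau)=0$, thereby covering both the edge and the vertex; or run a resultant computation against $p_1$. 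That idea is absent from your proposal, and without it the argument does not close.
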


\begin{proof}
   Let us define
   \begin{equation}\label{eq:fg}
       f(u,v)=\psi''_{uv}(1,u,v)-\psi''_{uv}(1,u^\land,v^\land),
       \quad
       g(u,v)=\psi_{uv}(\sqrt{2}-1,u,v).
   \end{equation}
   Observe that $f(u,v)=0$ is a parabola and $g(u,v)=0$ is an ellipse (see \Cref{fig:ellipse}, right)
   with leading coefficients of their tangent lines at $(u,v)$ equal to
   \begin{align*}
        c_p(u,v)&=\frac{-4c^3+4c+u-v}{4c^3-4c+u-v},\\
        c_e(u,v)&=\frac{
        (12\sqrt{2}-17)u
        +2c(15-11\sqrt{2}+(11\sqrt{2}-15)c^2
        +(7-5\sqrt{2})cu)+(3-2\sqrt{2})v}
        {(2 \sqrt{2}-3)u+v+2c(1+\sqrt{2}-c((1+\sqrt{2})c+(\sqrt{2}
        -1)v))},
    \end{align*}
  respectively. With the help of a computer algebra system, it can be again shown that
  $c_p(u,v),c_e(u,v)>0$ and $c_p(u,v)>c_e(u,v)$
  for $(u,v)\in{\mathcal D}$.
  Thus, by \eqref{eq:uv_domain} and \Cref{lem:uhat}, all these inequalities 
  hold for $u=u^\land$ and $v=v^\land$. 
  The parabola and the ellipse intersect at $(u^\land,v^\land)$, and
  $f(0,0)= - \tfrac{1}{8(1-c^2)}(v^{\land} - u^{\land})^2 
  - c(v^{\land} + u^{\land})<0$, 
  $g(0,0)=-\tfrac{16\sqrt{2}-20+(68\sqrt{2}-97)c^2}{4}<0$.
  Since, in addition, the ordinate of the intersection of the tangent line to ellipse with $v$-axis is positive (which can be again
  checked by computer algebra system), and the parabola is
  by \eqref{eq:psi_lc_0_dd_quartic1_uv} and \eqref{eq:fg}
  symmetric about the line $v=u$, the parabola and ellipse are concave at $(u^\land,v^\land)$. This implies
  $\psi_{uv}(\sqrt{2}-1,\overbar{u},\overbar{v})<0$ and the result of the lemma is confirmed.
\end{proof}

Finally, consider $v\leq -8-5c$. By \eqref{eq:uv_domain} we have $u<v$, therefore 
$u<-8-5c$, and $\text{lc}(\psi_{uv}(\cdot,u,v))=\tfrac{1}{64}(u-3c)^2\ge \tfrac{1}{64}(-8-5c-3c)^2=(1+c)^2$. 
From the above calculation we have $\text{lc}(\psi_{uv}(\cdot,u^\land,v^\land))\le \tfrac{1}{64}(\Tilde{u}-3c)^2=\frac{1}{112} \left(2 \sqrt{2}-1\right) (1-c)^4\le \text{lc}(\psi_{uv}(\cdot,u,v))$. Hence, if the pair $(u,v)$ induces a better outer approximant than the pair $(u^\land,v^\land)$, then the error graphs of induced approximants have at least ten intersections, which is impossible.
We can now finally formulate the main result of this subsection.
\begin{thm}
    The optimal quartic $G^1$ outer approximant is given by the unique solution of
    the nonlinear algebraic system \eqref{eq:system_quartic1_uv} on 
    the domain ${\mathcal D}$.
\end{thm}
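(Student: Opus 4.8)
The plan is to separate the statement into two claims and prove each in turn: first that the system \eqref{eq:system_quartic1_uv} has exactly one solution $(u^\land,v^\land)$ in the domain $\mathcal{D}$, and then that this solution induces the genuinely optimal outer approximant. The first claim is already furnished by \Cref{lem:uhat}: the quartic $p_1$ has a single zero $u^\land$ on $I=(3c,\Tilde u)$, and because $p_2(u,\cdot)$ is affine in $v$, the value $v^\land=-\beta_0(u^\land)/\beta_1$ is then determined, with $v^\land\ge 8-5c$ placing the pair in $\mathcal{D}$. So the real content is optimality, which I would obtain by contradiction: assume some $(\overbar u,\overbar v)\in\mathcal{D}$, distinct from $(u^\land,v^\land)$, strictly lowers $\max_{t\in[-1,1]}|\psi_{uv}(t,u,v)|$, and derive an impossibility. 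The three scalar summaries in \eqref{eq:psi_lc_0_dd_quartic1_uv} -- the leading coefficient $\tfrac1{64}(u-3c)^2$, the central value $\psi_{uv}(0,u,v)$, and the boundary quantity $\psi_{uv}''(1,u,v)$ -- are the levers that drive the whole argument, since a strictly smaller competitor must match or beat $\psi_{u^\land v^\land}$ at $t=0$, near $t=\pm1$, and in the leading term, and these constraints conflict.

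On the main branch $v\ge 8-5c$ I would run the case analysis on the position of $(\overbar u,\overbar v)$ relative to $(u^\land,v^\land)$. The case $\overbar v>v^\land$ is eliminated at once because $\psi_{uv}(0,\cdot,v)$ is increasing in $v$ once $v+5c\ge 8>0$, so the central error is already too large. When $\overbar v\le v^\land$, I would split on $\overbar u$ versus $u^\land$ and on the sign of $\psi_{uv}''(1,u^\land,v^\land)-\psi_{uv}''(1,\overbar u,\overbar v)$, using that $u^\land>3c$ makes $\tfrac1{64}(u-3c)^2$ monotone in $u$ there. Each resulting configuration forces the two degree-eight error functions -- which both carry double zeros at $\pm1$ -- to cross eight or ten times on $[-1,1]$ (illustrated in \Cref{fig:graph_psi_intersections} (right)), contradicting their degree. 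The unique configuration that evades this counting, namely $\overbar u\le u^\land$ together with $\psi_{uv}''(1,u^\land,v^\land)>\psi_{uv}''(1,\overbar u,\overbar v)$, is exactly the one dispatched by \Cref{lem:quarticG1}, which shows $\psi_{uv}(\sqrt2-1,\overbar u,\overbar v)<0$ so that the candidate is not even an outer approximant.

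It remains to dispose of the other branch permitted by \eqref{eq:uv_domain}, namely $v\le-8-5c$. Here $u<v\le-8-5c$ forces $\tfrac1{64}(u-3c)^2\ge(1+c)^2$, whereas $\text{lc}(\psi_{uv}(\cdot,u^\land,v^\land))\le\tfrac1{64}(\Tilde u-3c)^2=\tfrac1{112}(2\sqrt2-1)(1-c)^4\le(1+c)^2$; thus any competitor on this branch has a strictly larger leading coefficient, and a strictly smaller error would again demand at least ten intersections of two octic polynomials, which is impossible. Combining the two branches exhausts $\mathcal{D}$ and shows $(u^\land,v^\land)$ is optimal, which is the assertion of the theorem.

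The main obstacle is the surviving subcase resolved in \Cref{lem:quarticG1}, since it is the only one not settled by crossing counts: one must show that along the relevant corner of $\mathcal{D}$ the parabola $f=0$ stays outside the ellipse $g=0$ from \eqref{eq:fg}. I would reduce this, precisely as in that lemma, to verifying the tangent-slope inequality $c_p>c_e>0$ at $(u^\land,v^\land)$ together with the concavity of both conics there, checked via the explicit slope formulas on $\mathcal{D}$ with a computer algebra system. Beyond this, the only genuine care needed is bookkeeping: confirming that $\mathcal{D}$ sits inside \eqref{eq:uv_domain} so no admissible competitor is missed, and that the equioscillation ansatz placing the interior double zeros at $\pm(\sqrt2-1)$ -- the interior extrema of the degree-eight Chebyshev polynomial -- is the correct normalization, consistent with the graph in \Cref{fig:graph_psi_cubic_quartic} (right).
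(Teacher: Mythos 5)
Your proposal is correct and takes essentially the same route as the paper: \Cref{lem:uhat} for existence and uniqueness, elimination of $\overbar{v}>v^\land$ via the central value $\psi_{uv}(0,\cdot,\cdot)$, intersection-counting of the two octic error functions (using the leading coefficient and $\psi_{uv}''(1,\cdot,\cdot)$) for the cases with $\overbar{u}>u^\land$ or with the reversed second-derivative inequality, \Cref{lem:quarticG1} for the one surviving corner, and the leading-coefficient bound $\tfrac{1}{64}(u-3c)^2\ge(1+c)^2$ on the branch $v\le-8-5c$. The only cosmetic slip --- saying the two branches ``exhaust $\mathcal{D}$'' when they in fact exhaust the admissible set \eqref{eq:uv_domain}, of which $\mathcal{D}$ is a proper subset --- mirrors the paper's own loose phrasing and does not affect the argument.
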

\begin{rem}
   Note that although the zeros of $p_1$ given by \eqref{eq:polinomG} can
   be found analytically (since $p_1$ is a quartic polynomial), 
   it is much more efficient to compute the appropriate zero on $(3c,\tilde{u})$
   by some numerical method since the interval of interest provides a good 
   starting point.
\end{rem}


\begin{figure}[htbp]
    \centering
    \begin{subfigure}{.5\textwidth}
         \centering
         \includegraphics[scale=0.7]{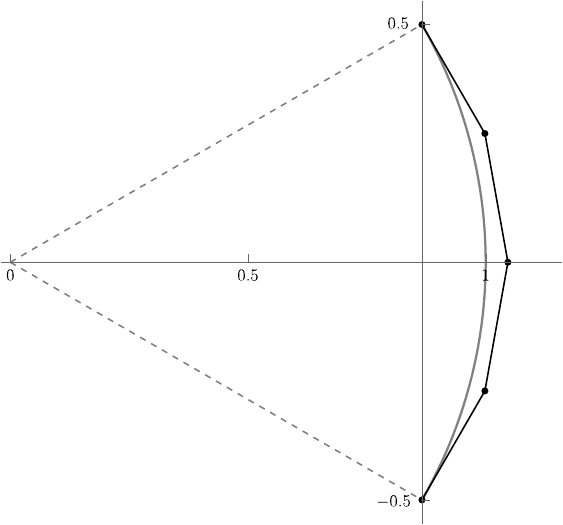}
    \end{subfigure}%
    \begin{subfigure}{.5\textwidth}
         \centering
         \includegraphics[scale=0.7]{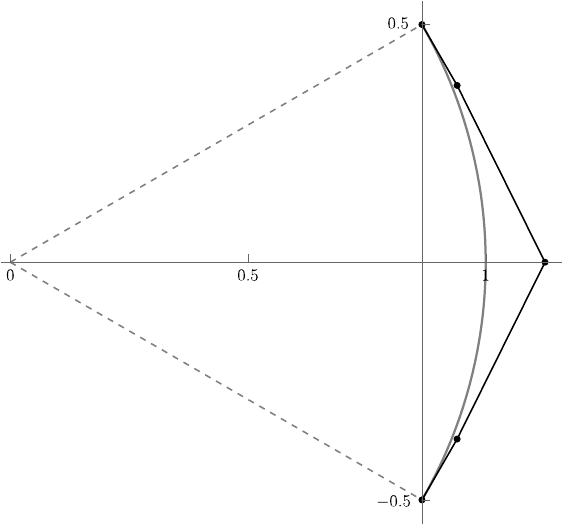}
    \end{subfigure}%
    \caption{The optimal (left) and the second best (right)
    outer quartic $G^1$ approximants of the circular arc given
    by the inner angle 
    $2\varphi = \frac{\pi}{3}$.
    The radial distances are $3.80\times 10^{-8}$
    and $5.31\times 10^{-5}$, respectively.}
    \label{fig:G41_one_two}
\end{figure}

\begin{figure}[htbp]
    \centering
    \begin{subfigure}{.4\textwidth} 
        \centering
        \includegraphics[scale=0.8]{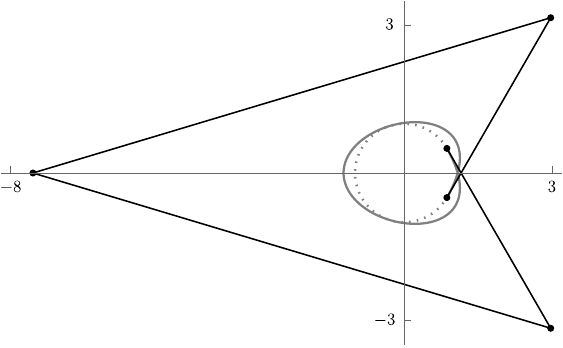}
    \end{subfigure}%
    \begin{subfigure}{.6\textwidth}
         \centering
         \includegraphics[scale=0.8]{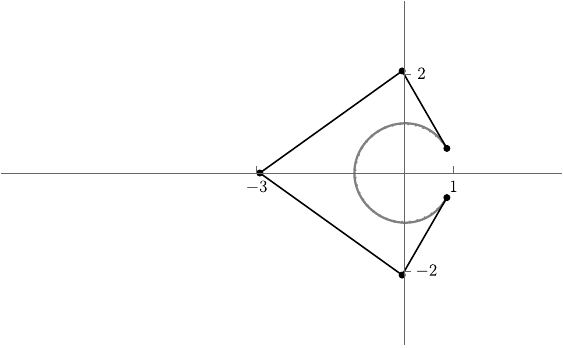}
    \end{subfigure}%
    \caption{Two additional outer quartic $G^1$
    approximants of the circular arc given by an inner angle
    $2\varphi = \frac{\pi}{3}$. They approximate
    the complementary circular arc with radial distances
    $2.34\times 10^{-1}$ and $1.38\times 10^{-2}$, respectively.}
    \label{fig:G41_three_four}
\end{figure}

\section{Numerical examples}\label{sec:examples}

In this section, several numerical examples will be presented
to confirm the theoretical results derived in the previous 
sections. The quality of optimal inner and outer 
approximants will be compared to the general 
optimal approximants obtained in 
\cite{Vavpetic-Zagar-2019-framework-circle-arcs} where 
the Hausdorff distance has been considered as a measure of
quality. Recall that in the case of inner and outer approximants,
the radial distance $|\psi_r|$ coincides with the 
Hausdorff one, too.\\
For the parabolic $G^0$ case, the optimal
parameters and radial 
distances for several inner angles of a circular arc 
are collected in \Cref{tab:G20}. The optimal parameters and
corresponding radial distances for the cubic $G^0$ and 
the cubic $G^1$ case are
in \Cref{tab:G30} and \Cref{tab:G31}, respectively.\\
As we have seen in \Cref{sec:G41}, the quartic $G^1$ case is
probably the most interesting. Optimal parameters with
the corresponding radial distances are in \Cref{tab:G41}.
There always exist four outer admissible solutions arising
from the zeros of $p_1$ given by \eqref{eq:polinomG}
but they provide quite different approximants. The best two are on
\Cref{fig:G41_one_two}. They look quite similar, but the
radial distances differ significantly. The other two approximants
correspond to two additional sets of parameters. Their graphs
are presented on \Cref{fig:G41_three_four}. It is seen that they
represent an approximation of the complementary circular arc.
This is because the radial error 
at a particular point on the curve is measured in the
radial direction from this point to the part of the circle which 
might not necessarily include the considered circular arc).
We guess that these solutions imply the optimal approximants
of the complementary arc (see \cite{Vavpetic-2020-optimal-circular-arcs}).\\
Finally, the optimal parameters with the corresponding
radial distances for the quartic $G^2$ case are collected in
\Cref{tab:G42}.

\begin{table}[htbp]
\begin{equation*}
\begin{array}{|c|c|c|c|c|c|c|}
\hline
\varphi  & \xi ^* & \text{error} & \xi ^{\land } & \text{error} & \xi ^{\lor }
   & \text{error} \\ \hline
 \frac{\pi }{2} & 2.21535 & 1.08\times 10^{-1} & \text{no approximant} &
   \text{--} & 2. & 1.34\times 10^{-1} \\ \hline
 \frac{\pi }{3} & 1.54728 & 2.36\times 10^{-2} & 2. & 2.5\times 10^{-1} & 1.5 &
   3.18\times 10^{-2} \\ \hline
 \frac{\pi }{4} & 1.30843 & 7.77\times 10^{-3} & 1.41421 & 6.07\times 10^{-2} &
   1.29289 & 1.08\times 10^{-2} \\ \hline
 \frac{\pi }{6} & 1.13713 & 1.58\times 10^{-3} & 1.1547 & 1.04\times 10^{-2} &
   1.13397 & 2.25\times 10^{-3} \\ \hline
 \frac{\pi }{8} & 1.07713 & 5.04\times 10^{-4} & 1.08239 & 3.14\times 10^{-3} &
   1.07612 & 7.25\times 10^{-4} \\ \hline
 \frac{\pi }{12} & 1.03427 & 1.\times 10^{-4} & 1.03528 & 6.01\times 10^{-4} &
   1.03407 & 1.45\times 10^{-4} \\ \hline
\end{array}
\end{equation*}
\caption{The optimal parameters $\xi^*$, $\xi^\land$, and $\xi^\lor$ for parabolic $G^0$ approximants according to the radial
distance
for several different inner angles $2\varphi$ of a circular arc.}
  \label{tab:G20}
\end{table}

\begin{table}[htbp]
\begin{equation*}
\begin{array}{|@{\;}c@{\;}|@{\;}c@{\;}|@{\;}c@{\;}|@{\;}c@{\;}|@{\;}c@{\;}|@{\;}c@{\;}|@{\;}c@{\;}|@{\;}c@{\;}|@{\;}c@{\;}|@{\;}c@{\;}|}
\hline
 \varphi  & \xi ^* & \eta ^* & \text{error} & \xi ^{\land } & \eta ^{\land } &
   \text{error} & \xi ^{\lor } & \eta ^{\lor } & \text{error} \\ \hline
 \frac{\pi }{2} & 1.32801 & 0.940495 & 3.99\times 10^{-3} & 1.33333 & 1. &
   1.84\times 10^{-2} & 1.32508 & 0.925926 & 6.19\times 10^{-3} \\ \hline
 \frac{\pi }{3} & 1.16617 & 0.474943 & 3.75\times 10^{-4} & 1.16667 & 0.481125
   & 1.54\times 10^{-3} & 1.16587 & 0.473285 & 5.99\times 10^{-4} \\ \hline
 \frac{\pi }{4} & 1.09754 & 0.315229 & 6.84\times 10^{-5} & 1.09763 & 0.316582
   & 2.73\times 10^{-4} & 1.09748 & 0.31486 & 1.1\times 10^{-4} \\ \hline
 \frac{\pi }{6} & 1.04465 & 0.190431 & 6.11\times 10^{-6} & 1.04466 & 0.190599
   & 2.39\times 10^{-5} & 1.04465 & 0.190384 & 9.89\times 10^{-6} \\ \hline
 \frac{\pi }{8} & 1.02537 & 0.137616 & 1.09\times 10^{-6} & 1.02537 & 0.137655
   & 4.25\times 10^{-6} & 1.02537 & 0.137605 & 1.77\times 10^{-6} \\ \hline
 \frac{\pi }{12} & 1.01136 & 0.0892586 & 9.65\times 10^{-8} & 1.01136 &
   0.0892636 & 3.73\times 10^{-7} & 1.01136 & 0.0892572 & 1.57\times 10^{-7} \\ \hline
\end{array}
\end{equation*}
\caption{The table of the optimal parameters $(\xi^*,\eta^*)$, $(\xi^\land,\eta^\land)$, and $(\xi^\lor,\eta^\lor)$ for cubic $G^0$ approximants according to the radial distance for several different inner angles $2\varphi$ of a circular arc.}
  \label{tab:G30}
\end{table}

\begin{table}[htbp]
\begin{equation*}
\begin{array}{|c|c|c|c|c|c|c|}
\hline
 \varphi  & \xi ^* & \text{error} & \xi ^{\land } & \text{error} & \xi ^{\lor }
   & \text{error} \\ \hline
 \frac{\pi }{2} & 1.31574 & 1.32\times 10^{-2} & 1.33333 & 1.84\times 10^{-2} &
   1.1547 & 1.34\times 10^{-1} \\ \hline
 \frac{\pi }{3} & 0.768087 & 1.11\times 10^{-3} & 0.7698 & 1.54\times 10^{-3} &
   0.752158 & 1.15\times 10^{-2} \\ \hline
 \frac{\pi }{4} & 0.551915 & 1.96\times 10^{-4} & 0.552285 & 2.73\times 10^{-4}
   & 0.548584 & 1.96\times 10^{-3} \\ \hline
 \frac{\pi }{6} & 0.35722 & 1.71\times 10^{-5} & 0.357266 & 2.39\times 10^{-5}
   & 0.356822 & 1.66\times 10^{-4} \\ \hline
 \frac{\pi }{8} & 0.265206 & 3.04\times 10^{-6} & 0.265216 & 4.25\times 10^{-6}
   & 0.265115 & 2.92\times 10^{-5} \\ \hline
 \frac{\pi }{12} & 0.175535 & 2.67\times 10^{-7} & 0.175537 & 3.73\times
   10^{-7} & 0.175524 & 2.54\times 10^{-6} \\ \hline
\end{array}
\end{equation*}
\caption{The table of the optimal parameters $\xi^*$, $\xi^\land$, and $\xi^\lor$ for cubic $G^1$ approximants according to the radial distance for several different inner angles $2\varphi$ of a circular arc.}
  \label{tab:G31}
\end{table}

\begin{table}[htbp]
\begin{equation*}
\begin{array}{|@{\;}c@{\;}|@{\;}c@{\;}|@{\;}c@{\;}|@{\;}c@{\;}|@{\;}c@{\;}|@{\;}c@{\;}|@{\;}c@{\;}|@{\;}c@{\;}|@{\;}c@{\;}|@{\;}c@{\;}|}
\hline
 \varphi  & \xi ^* & \eta ^* & \text{error} & \xi ^{\land } & \eta ^{\land } &
   \text{error} & \xi ^{\lor } & \eta ^{\lor } & \text{error} \\ \hline
 \frac{\pi }{2} & 0.871525 & 1.50505 & 1.57\times 10^{-4} & 0.87247 & 1.50401 &
   2.4\times 10^{-4} & 0.866025 & 1.51197 & 9.47\times 10^{-4} \\ \hline
 \frac{\pi }{3} & 0.547788 & 1.20082 & 6.21\times 10^{-6} & 0.547886 & 1.20071
   & 9.59\times 10^{-6} & 0.547225 & 1.20145 & 3.59\times 10^{-5} \\ \hline
 \frac{\pi }{4} & 0.402721 & 1.10847 & 6.25\times 10^{-7} & 0.402742 & 1.10845
   & 9.69\times 10^{-7} & 0.402599 & 1.10858 & 3.56\times 10^{-6} \\ \hline
 \frac{\pi }{6} & 0.264731 & 1.0468 & 2.45\times 10^{-8} & 0.264734 & 1.0468 &
   3.8\times 10^{-8} & 0.264716 & 1.04681 & 1.38\times 10^{-7} \\ \hline
 \frac{\pi }{8} & 0.197581 & 1.02605 & 2.46\times 10^{-9} & 0.197582 & 1.02605
   & 3.82\times 10^{-9} & 0.197577 & 1.02605 & 1.38\times 10^{-8} \\ \hline
 \frac{\pi }{12} & 0.131263 & 1.01149 & 9.6\times 10^{-11} & 0.131264 & 1.01149
   & 1.49\times 10^{-10} & 0.131263 & 1.01149 & 5.36\times 10^{-10} \\ \hline
\end{array}
\end{equation*}
\caption{The table of the optimal parameters $(\xi^*,\eta^*)$, $(\xi^\land,\eta^\land)$, and $(\xi^\lor,\eta^\lor)$ for quartic $G^1$ approximants according to the radial distance for several different inner angles $2\varphi$ of a circular arc.}
  \label{tab:G41}
\end{table}

\begin{table}[htbp]
\begin{equation*}
\begin{array}{|c|c|c|c|c|c|c|}
\hline
 \varphi  & \xi ^* & \text{error} & \xi ^{\land } & \text{error} & \xi ^{\lor }
   & \text{error} \\ \hline
 \frac{\pi }{2} & - & 6.95\times 10^{-4} & 0.866025 & 1.04\times 10^{-2} &
   0.866025 & 9.47\times 10^{-4} \\ \hline
 \frac{\pi }{3} & 0.547186 & 2.62\times 10^{-5} & 0.546677 & 3.62\times 10^{-4}
   & 0.547225 & 3.59\times 10^{-5} \\ \hline
 \frac{\pi }{4} & 0.402587 & 2.59\times 10^{-6} & 0.402437 & 3.5\times 10^{-5}
   & 0.402599 & 3.56\times 10^{-6} \\ \hline
 \frac{\pi }{6} & 0.264714 & 1.\times 10^{-7} & 0.264692 & 1.33\times 10^{-6} &
   0.264716 & 1.38\times 10^{-7} \\ \hline
 \frac{\pi }{8} & 0.197577 & 1.\times 10^{-8} & 0.197572 & 1.32\times 10^{-7} &
   0.197577 & 1.38\times 10^{-8} \\ \hline
 \frac{\pi }{12} & 0.131263 & 3.9\times 10^{-10} & 0.131262 & 5.1\times 10^{-9}
   & 0.131263 & 5.36\times 10^{-10} \\ \hline
\end{array}
\end{equation*}
\caption{The table of the optimal parameters $\xi^*$, $\xi^\land$, and $\xi^\lor$ for quartic $G^2$ approximants according to the radial distance for several different inner angles $2\varphi$ of a circular arc.}
  \label{tab:G42}
\end{table}

\section{Conclusion}
\label{sec:conclusion}
In this paper, we presented several optimal one-sided approximants of a circular arc given by an inner angle $\varphi \in (0,\tfrac{\pi}{2}]$. Detailed analyses of parabolic
$G^0$, cubic $G^0$, $G^1$, $G^2$, and
quartic $G^1$, $G^2$ cases have been considered. 
Optimal inner and outer approximants are given
explicitly as a solution of a (at most) quartic algebraic
equation. The uniqueness of the optimal solution was
confirmed in all cases.\\
The problem of constructing the optimal inner and outer
approximants gets increasingly complicated as the gap
between the order of geometric continuity and the degree of
the polynomial is growing. Thus, the topics for future work include the construction of higher-degree one-sided approximants with low-order geometric approximation. The first such case
might be the quartic $G^0$ approximation.\\

{\noindent \sl Acknowledgments.}
During the preparation of the paper, the first author was a junior researcher funded by the Slovenian Research and Innovation Agency program  P1-0294. The Slovenian Research and Innovation Agency program P1-0292 and the grant J1-4031 supported the second author. The third author was partly supported by the program P1-0288 and the grants N1-0137, N1-0237 and J1-3005 by the Slovenian Research and Innovation Agency.


\end{document}